\numberwithin{equation}{section}
\theoremstyle{definition}
\theoremstyle{plain}
\newtheorem{theorem}{Theorem}[section]
\newtheorem{lemma}[theorem]{Lemma}
\newtheorem{conjecture}{Conjecture}[section]
\newcommand{\CC}{\mathbf{C}}
\newcommand{\ZZ}{\mathbf{Z}}
\newcommand{\A}{\mathcal{A}}
\newcommand{\C}{\mathcal{C}}
\newcommand{\OO}{\mathcal{O}}
\newcommand{\hh}{\mathfrak{h}}
\begin{document}

\title[The diagonal coinvariant ring]{The diagonal coinvariant ring of a complex reflection group}

\author{Stephen Griffeth}

\address{Instituto de Matem\'aticas \\
Universidad de Talca }

\begin{abstract} For an irreducible complex reflection group $W$ of rank $n$ containing $N$ reflections, we put $g=2N/n$ and construct a $(g+1)^n$-dimensional irreducible representation of the Cherednik algebra which is (as a vector space) a quotient of the diagonal coinvariant ring of $W$. We propose that this representation of the Cherednik algebra is the single largest representation bearing this relationship to the diagonal coinvariant ring, and that further corrections to this estimate of the dimension of the diagonal coinvariant ring by $(g+1)^n$ should be orders of magnitude smaller. A crucial ingredient in the construction is the existence of a dot action of a certain product of symmetric groups (the Namikawa Weyl group) acting on the parameter space of the rational Cherednik algebra and leaving invariant both the finite Hecke algebra and the spherical subalgebra; this fact is a consequence of ideas of Berest-Chalykh on the relationship between the Cherednik algebra and quasi-invariants. \end{abstract}

\thanks{I thank François Bergeron and Vic Reiner  for stimulating correspondences, Theo Douvropoulos, Arun Ram, and Vic Reiner for comments on a preliminary version of this article, and Carlos Ajila and Arun Ram for many interesting conversations on this topic. I am especially grateful to Gwyn Bellamy and Jos\'e Simental for pointing me to relevant recent work on Namikawa Weyl groups and the spherical subalgebra, and to Daniel Juteau for teaching me just enough GAP to get by. I acknowledge the financial support of Fondecyt Proyecto Regular 1190597.}

\maketitle

\section{Introduction}

\subsection{Coinvariant rings} Given a finite linear group $W \subseteq \mathrm{GL}(\hh)$, where $\hh$ is a finite-dimensional complex vector space, the group $W$ acts by automorphisms on the ring $\CC[\hh]$ of polynomial functions on $\hh$. It is well-known that the quotient variety $\hh/W$ is smooth precisely when the ring $\CC[\hh]^W$ is isomorphic to a polynomial ring, which happens exactly when the group $W$ is generated by reflections. In this case, letting $J_W$ be the ideal of $\CC[\hh]$ generated by the positive-degree $W$-invariant polynomials, the \emph{coinvariant ring} of $W$ is the ring $\CC[\hh]/J_W$, which might be thought of us as the ring of functions on the scheme-theoretic fiber over $0$ of the quotient map $\hh \to \hh/W$, and is isomorphic to the regular representation of $W$. In fact, it is a graded $W$-module, and the \emph{exponents} of a given irreducible representation $E$ of $\CC W$ are the degrees in which it occurs in this graded module. For reflection groups such as the symmetric group with combinatorial structure encoded via partitions and various sorts of tableaux, these exponents may be calculated via combinatorial statistics. 

\subsection{Diagonal coinvariant rings} The group $W$ also acts on $\CC[\hh^* \oplus \hh]$ by automorphisms, and by analogy with the preceding one may consider the quotient variety $(\hh^* \times \hh)/W$, which is an example of a symplectic singularity. Its ring of functions is the invariant ring $\CC[\hh^* \oplus \hh]^W$, which in case $W$ is a reflection group has a more interesting structure than $\CC[\hh]^W$. Likewise, letting $I_W$ be the ideal of $\CC[\hh^* \oplus \hh]$ generated by the positive degree elements of $\CC[\hh^* \oplus \hh]^W$, the \emph{diagonal coinvariant ring} is the quotient 
$$R_W=\CC[\hh^* \oplus \hh] / I_W.$$ It may be thought of as the ring of functions on the scheme-theoretic fiber over $0$ of the quotient map $\hh^* \times \hh \to (\hh^* \times \hh)/W$ (for this reason and to avoid confusion with the co-invariants of a group action, perhaps the names \emph{zero-fiber ring} and \emph{diagonal zero-fiber ring} are more suitable). The ring $R_W$ carries a bi-grading, and in analogy with the case of the coinvariant ring, one may ask for the bi-graded character of this ring. However, the answer to this question is known explicitly only for two classes of examples: the symmetric groups and the dihedral groups. For most reflection groups $W$ we do not even have a conjectural formula for the dimension of $R_W$. 

However, for real reflection groups Mark Haiman \cite{Hai} conjectured, and Iain Gordon \cite{Gor} proved, that there is a quotient of $R_W$ of dimension $(h+1)^n$, where $h$ is the Coxeter number of $W$. Gordon predicted that this phenomenon generalizes at least to the complex reflection groups of type $G(\ell,m,n)$, and Vale \cite{Val} and the author \cite{Gri} proved this. Later, Gordon and the author \cite{GoGr} showed (assuming the freeness conjecture for Hecke algebras, which is now known) that a similar technique, based on Rouquier's theorem on the uniqueness of highest weight covers, would produce a quotient ring of $R_W$ of dimension $(h+1)^n$, where now we define the Coxeter number $h$ of an irreducible complex reflection group by 
$$h=(N+N^*)/n,$$ where $N$ is the number of reflections in $W$, $N^*$ is the number of reflecting hyperplanes, and $n=\mathrm{dim}(\hh)$ is the rank. Here we point out that, while these quotients are natural from the point of view of Catalan combinatorics (as predicted in \cite{BeRe}), they should not be regarded as the best approximations available to the full diagonal coinvariant ring in case the group $W$ contains reflections of order greater than $2$.

\subsection{Lower bounds via representation theory} Meanwhile, together with Ajila \cite{AjGr} we have very recently observed that a more delicate application of the same techniques can be used to improve the lower bound $\mathrm{dim}(R_W) \geq (h+1)^n$ for the type B Weyl groups. However, this improvement is orders of magnitude smaller than $(h+1)^n$, which we argue should be perhaps be regarded as the principal term in an approximation of $\mathrm{dim}(R_W)$. Thus the first question to be answered is, do we already know the analogous principal term for an irreducible complex reflection group? 

Our main purpose here is to observe that for complex groups containing reflections of order greater than $2$, the approximation by $(h+1)^n$ should \emph{not} be regarded as the principal term. Rather, we have:

\begin{theorem} \label{main}
Let $W$ be an irreducible complex reflection group of rank $n$ containing $N$ reflections. There is a quotient of the diagonal coinvariant ring $R_W$ of dimension $(g+1)^n$, where $g=2N/n$.
\end{theorem}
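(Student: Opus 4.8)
The plan is to produce the desired quotient as the underlying graded vector space of a finite-dimensional irreducible module $L=L_c(\delta)$ in category $\OO$ for the rational Cherednik algebra $H_c=H_{1,c}(W)$ at $t=1$, where $\delta$ is a suitable one-dimensional representation of $W$ (the trivial representation, or a small power of the determinant character of $\hh$) and $c$ is a suitable parameter; here the standard module is $\Delta_c(\delta)=\CC[\hh]\otimes\delta$ and $L$ is its simple head. Granting the key numerical fact $\dim_\CC L=(g+1)^n$, the passage from $L$ to a quotient of $R_W$ of the same dimension is then carried out exactly as in \cite{Gor} and \cite{GoGr}.

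The first task is to locate $c$. For a real reflection group the relevant value is $c=(h+1)/h$, at which $\Delta_c(\delta)$ first acquires a singular vector whose submodule has colength $(h+1)^n$. In general the shift controlling this colength is governed by $g$ rather than $h$, because a reflecting hyperplane $H$ contributes through its full cyclic pointwise stabilizer of order $e_H$ and not merely through an ``order two part''; concretely $g=\tfrac{2}{n}\sum_H(e_H-1)$ whereas $h=\tfrac{1}{n}\sum_H e_H$, and these agree precisely when all $e_H=2$. I would first verify the count $(g+1)^n$ by hand in rank one, where $W=\ZZ/\ell$, $g+1=2\ell-1$, and $R_W=\CC[x,y]/(x^\ell,y^\ell,xy)$ already has dimension $2\ell-1$; and then in the imprimitive case $G(\ell,1,n)$, using the explicit combinatorics of category $\OO$ (the $\ell$-partition parametrisation of standard modules and the associated $c$-function, cf.\ \cite{Gri,Val}).

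For a uniform treatment --- in particular for the exceptional complex reflection groups, where no such combinatorics is available --- I would pass through the Berest--Chalykh description of (spherical) $H_c$-modules in terms of differential operators twisted by quasi-invariants. Its \emph{crucial} output is a dot action of the Namikawa Weyl group $\Gamma\cong\prod_{[H]}S_{e_H}$ (a product of symmetric groups indexed by the $W$-orbits $[H]$ of reflecting hyperplanes) on the space of parameters, under which both the spherical subalgebra $eH_ce$, with $e$ the symmetrizing idempotent of $W$, and the finite Hecke algebra $\cH_q(W)$ are carried to themselves. Using $\Gamma$ I would move $c$ within its orbit to a parameter $c'$ at which a finite-dimensional module of dimension $(g+1)^n$ is manifestly present (an integral parameter reachable from the rank-one and $G(\ell,1,n)$ analyses); invariance of $eH_ce$ transports this module back to $c$ with its dimension intact, while invariance of $\cH_q(W)$, together with Rouquier's theorem on the uniqueness of highest weight covers (as exploited in \cite{GoGr}), identifies the transported module with $L_c(\delta)$ inside $\OO_c$.

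The main obstacle is precisely the equality $\dim_\CC L_c(\delta)=(g+1)^n$: one must exhibit $c$ as lying in the $\Gamma$-orbit of a parameter where this dimension is visible, and then carry the computation out. For the infinite families this should reduce, as above, to rank-one and $G(\ell,1,n)$ bookkeeping, but for the exceptional groups it will require an explicit, presumably GAP-assisted, analysis of the classes of reflecting hyperplanes, of the structure of $\Gamma$, and of its orbits on the parameter space --- finite but intricate data. A secondary point to verify is that the argument of \cite{Gor,GoGr}---which deduces the statement about $R_W$ from the finite-dimensional module---applies at the shifted parameter $c$ and not merely at $c=(h+1)/h$, i.e.\ that it uses the parameter only through the finite-dimensionality of $L$.
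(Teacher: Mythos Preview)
Your skeleton is right---produce a finite-dimensional irreducible $L$ for the Cherednik algebra and use it to cut out a quotient of $R_W$---and you correctly flag the Namikawa Weyl group action and Rouquier's uniqueness theorem as the key tools. But two points are off.

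First, you treat the passage from $L$ to a quotient of $R_W$ as automatic once $L$ is finite-dimensional. It is not: what is needed is that $L$ be of \emph{coinvariant type}, meaning the determinant representation of $W$ occurs in $L$ with multiplicity exactly one (this is Lemma~\ref{CT lemma} in the paper). Verifying this multiplicity-one condition is precisely where the exponent duality
\[
e_i(\hh)+e_{n-i+1}(\kappa(\hh^*))=g
\]
(Theorem~\ref{main2}) enters: it guarantees that the image of $\kappa_\sigma(\hh^*)$ in $\Delta_{\sigma(c)}(\mathrm{triv})=\CC[\hh]$ is a system of parameters in degree $g+1$ satisfying the hypotheses of \S\ref{det mult one}, whence the determinant appears once via a Koszul-complex argument. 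You never mention this duality, and your plan to handle the exceptional groups by a GAP analysis of $\Gamma$-orbits would miss it; the paper instead proves the duality uniformly via the Berest--Chalykh quasi-invariant technique, and this is what makes the main proof classification-free.

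Second, the direction of transport is inverted. The paper does not locate a parameter where the $(g+1)^n$-dimensional module is ``manifestly present'' and then move it. It starts at a parameter $c$ on the hyperplane $c_{\hh^*}=1$ where, by Etingof--Stoica, $L_c(\mathrm{triv})$ is one-dimensional (the cokernel of $\Delta_c(\hh^*)\to\Delta_c(\mathrm{triv})$), then applies the Rouquier equivalence $\OO_c\to\OO_{\sigma(c)}$ associated to the specific element $\sigma\in\C_\ZZ\rtimes G_W$ of \S\ref{sigma}. The transported module is $L_{\sigma(c)}(\mathrm{triv})$, and its dimension $(g+1)^n$ is read off from the resulting presentation $\Delta_{\sigma(c)}(\kappa_\sigma(\hh^*))\to\Delta_{\sigma(c)}(\mathrm{triv})$, using the computation $\sigma(c)_{\kappa_\sigma(\hh^*)}=g+1$. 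Your idea of transporting via invariance of $eH_ce$ is closer to the paper's \emph{second} construction (the Heckman--Opdam shift functor), which the paper itself presents as conditional on unproved conjectures; the unconditional proof uses only the Hecke-algebra invariance and Rouquier's theorem.
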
 As usual, we prove this bound by exhibiting an irreducible representation $L=L_c(\mathrm{triv})$ of the rational Cherednik algebra of dimension $(g+1)^n$ in which the determinant appears exactly once (see Lemma \ref{CT lemma}; in \cite{AjGr} we have called such representations \emph{coinvariant type}). We do this in three ways, one of which is conjectural: firstly, we prove it in general using the philosophy from \cite{GoGr}. The technical details must be modified substantially, due to the fact that $g+1$ is not prime to $h$ in general. This proof requires as input some striking coincidences from the numerology of complex reflection groups. Secondly, for the infinite family $G(\ell,m,n)$ we use the tools developed in \cite{Gri}, \cite{Gri2}, \cite{Gri3}, and \cite{FGM}. This method gives the most information: it gives explicit bases and a practical graded character formula. For future work improving the bound in case $W=G(\ell,m,n)$ this construction is likely to be the most useful of the three. Finally, we sketch a construction of the required representation that depends on an elaboration of the beautiful observations of Berest and Chalykh \cite{BeCh} linking the Cherednik algebra to quasi-invariants; our proof that this actually works depends, however, on calculations with Schur elements and thereby on the widely-believed symmetrizing trace conjecture for the Hecke algebra, which is currently known to hold only for the infinite family, real groups, and a few of the exceptional complex reflection groups, as well as a conjecture about how Heckman-Opdam shift functors interact with standard modules. The numerological coincidences mentioned above would follow as corollaries of this method (assuming the needed conjectures are established) rather than appearing as miraculously convenient ingredients in the proof. 

\subsection{KZ twists and the duality in the exponents}

The parameter space $\C$ for the rational Cherednik algebra consists of $W$-invariant tuples of numbers $c=(c_{H,j})_{H \in \A, 0 \leq j \leq n_H-1}$ indexed by pairs $(H,j)$ consisting of a reflecting hyperplane $H \in \A$ for $W$ and an integer $0 \leq j \leq n_H-1$. The corresponding finite Hecke algebra is the quotient of the group algebra of the braid group of $W$ by relations of the form
$$\prod_{j=0}^{n_H-1} (T_H-e^{2\pi i(c_{H,j}+j/n_H)})=0,$$ where $H \in \A$ is a reflecting hyperplane, $T_H$ a generator of monodromy around $H$, and $n_H$ the order of the cyclic reflection subgroup of $W$ fixing $H$ pointwise. This quotient is invariant by the group $\C_\ZZ$ of translations by integer parameters, as well as by the group $G_W$ of permutations of the parameters $c_{H,j}+j/n_H$'s. Thus the parameter space for the Hecke algebra is effectively the quotient $\C / (\C_\ZZ \rtimes G_W)$, and this together with the relationship between the Hecke algebra and the Cherednik algebra implies that the group $\C_\ZZ \rtimes G_W$ acts by permutations (\emph{KZ twists}) on the set of irreducible representations of $W$. Let $\sigma_0$ be the longest element of the subgroup of $G_W$ fixing the indices $0$ and let $\tau$ be the translation by $1$ of all $c_{H,j}$'s with $j \neq 0$. The composite $\sigma=\tau \sigma_0$ thus induces a permutation $\kappa$ of the irreducible representations of $W$ (see \ref{KZ twists} for the precise definition). 

We should remark that the group $G_W$ is the \emph{Namikawa Weyl group} of the symplectic singularity $(\hh^* \times \hh)/W$ (see \cite{Nam}, \cite{Nam2}, and \cite{Nam3} for the general theory and Lemma 4.1 of \cite{BST} for the agreement with $G_W$ in our case). It is therefore tempting to refer to $\C_\ZZ \rtimes G_W$ as the \emph{affine Namikawa Weyl group of $W$}.

In the first construction proving Theorem \ref{main} above, a certain duality for the exponents plays a key role. Since it may be of independent interest, we state the result here:
\begin{theorem} \label{main2}
Let $\kappa$ be the permutation of the irreducible representations of $W$ induced by $\sigma$. Then
$$e_i(\hh)+e_{n-i+1}(\kappa(\hh^*))=g \quad \hbox{for all $1 \leq i \leq n$}$$ where $e_i(E)$ are the exponents of the irreducible representation $E$ of $W$, which are the degrees in which it appears in the ordinary coinvariant ring. In particular, $g=2N/n$ is an integer.
\end{theorem}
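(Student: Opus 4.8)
The plan is to derive the identity $e_i(\hh) + e_{n-i+1}(\kappa(\hh^*)) = g$ from the well-known fact that the exponents of $\hh$ and $\hh^*$ together encode the fake degree of the reflection representation, combined with the effect of the KZ twist $\sigma = \tau\sigma_0$ on graded characters. First I would recall the classical description of exponents: for a well-generated (or any) irreducible complex reflection group $W$, the exponents $e_1(\hh) \le \cdots \le e_n(\hh)$ of the reflection representation $\hh$ are the coexponents shifted by one, and dually $e_i(\hh^*)$ are obtained from the degrees. The key numerical input is that $\sum_i e_i(\hh) + \sum_i e_i(\hh^*) = N + N^*$ (the total number of reflections plus reflecting hyperplanes counted appropriately); more precisely I want the finer statement that $\{e_i(\hh)\}$ and $\{e_i(\hh^*)\}$ are interleaved so that $e_i(\hh) + e_{n-i+1}(\hh^*) = h = (N+N^*)/n$ when $W$ is a Coxeter group. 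The content of Theorem \ref{main2} is that replacing $\hh^*$ by its KZ-twist $\kappa(\hh^*)$ shifts this sum from $h$ to $g = 2N/n$, i.e. it trades the hyperplane count $N^*$ for the reflection count $N$.

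The main steps would be: (1) Compute the action of $\sigma_0$ and of $\tau$ separately on the graded multiplicities. The element $\sigma_0$, being the longest element of the stabilizer in $G_W$ of the index-$0$ parameters, acts on representations by a combinatorially explicit "transpose"-type involution; on the level of $c$-functions $c_\lambda$ (the scalar by which the Euler element acts on the lowest piece of $\Delta_c(\lambda)$) it implements $c_{H,j} \mapsto c_{H,n_H-j}$ for $j\ne 0$. (2) Track how the exponents of an irreducible $E$ relate to the $c$-function and to the position of $E$ inside a standard module at an appropriate special parameter; here one uses that $\kappa$ is characterized (as in \ref{KZ twists}) by matching KZ-images of standard modules, so that $e_i(\kappa(E))$ is computed from $e_i(E)$ by the substitution induced by $\sigma$ on the hyperplane data. (3) Apply this to $E = \hh^*$, whose exponents are classically known, and read off that the $\sigma$-twist sends $e_{n-i+1}(\hh^*)$ to $g - e_i(\hh)$. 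Summing or comparing with the Coxeter-number identity $e_i(\hh)+e_{n-i+1}(\hh^*) = h$ and the elementary relation between $h$, $g$, $N$, $N^*$ then yields the claim, and integrality of $g = 2N/n$ falls out since all exponents are integers.

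I expect the main obstacle to be step (2): making precise how the combinatorial KZ twist $\kappa$ acts on the specific representation $\hh^*$ and its exponents, since the KZ functor and its twists are defined representation-theoretically (via monodromy and Rouquier's highest weight covers) rather than by an obvious formula on fake degrees. One has to either (a) verify the identity uniformly using the known classification of exponents and a case-free description of $\kappa$ on low-dimensional representations like $\hh$ and $\hh^*$, or (b) fall back on a case-by-case check over the Shephard--Todd list, using the explicit tables of degrees, codegrees, and the action of $G_W$. A clean uniform argument would likely route through the interpretation of $G_W$ as the Namikawa Weyl group and the compatibility of its dot action with the spherical subalgebra mentioned in the abstract, which controls how the grading shift interacts with the duality $\hh \leftrightarrow \hh^*$; the hardest part is to show this abstract compatibility descends to the asserted exponent-by-exponent identity rather than merely the identity of the sums $\sum_i e_i$.
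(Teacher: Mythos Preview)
Your proposal has a genuine gap at step (2), which you yourself flag as the main obstacle but do not resolve. The assertion that ``$e_i(\kappa(E))$ is computed from $e_i(E)$ by the substitution induced by $\sigma$ on the hyperplane data'' is not justified and is essentially the theorem itself: the KZ twist $\kappa$ is defined by what monodromy does to standard modules, and there is no a priori formula relating the coinvariant-ring exponents of $\kappa(E)$ to those of $E$. Your framing---start from the Coxeter identity $e_i(\hh)+e_{n-i+1}(\hh^*)=h$ and shift it to $g$---also presupposes that identity, which fails for non-real $W$; the whole point is that $\kappa(\hh^*)$ is in general a \emph{different} irreducible than $\hh^*$, and its exponents are not obtained from those of $\hh^*$ by any parameter substitution. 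Your fallback options (case-by-case, or an abstract Namikawa-Weyl-group compatibility) either abandon a uniform proof or restate the difficulty without a mechanism.

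The paper's argument is quite different and supplies exactly the missing mechanism: it uses the Berest--Chalykh theory of $E$-valued quasi-invariants $Q_m(E)$. For a multiplicity function $m$ with $m_{H,i}=m_S$ constant on each orbit $S$, one has $Q_m(E)=\prod_S \delta_S^{m_S}(\CC[\hh]\otimes E)$, whose $W$-invariants have an explicit graded character in terms of the exponents of $(\chi\otimes E)^*$. On the other hand, the key equality $eQ_m(E)=eQ_k(E)$ (for $k$ compatible with $\sigma_0\cdot c$) identifies the same space with $e\Delta_{\tau(0)}(\kappa_\tau(E))$, whose graded character involves the exponents of $\kappa_\tau(E)^*$ shifted by the $c$-function $\tau(0)_{\kappa_\tau(E)}$. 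Equating the two expressions gives $\tau(0)_{\kappa_\tau(E)}+e_i'=M+e_i$ degree by degree. Specializing $m_S=-(n_H-1)$ makes $-M=N$ and $\chi=\mathrm{det}^{-1}$, and a Poincar\'e-duality computation converts exponents of $E^*\otimes\mathrm{det}$ into $N-e_{\dim E-i+1}(E)$; taking $E=\kappa_\tau^{-1}(\hh^*)$ and computing $\tau(0)_{\hh^*}=-g$ yields the stated identity. The quasi-invariant equality \eqref{Q inv} is the bridge between the monodromy-theoretic definition of $\kappa$ and the graded-character information about exponents, and it is precisely this bridge that your outline lacks.
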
 We note that when the group is real, $\kappa(\hh^*)=\hh$, $h=g$, and this duality reduces to the usual one. On the other hand, when $W$ is one of the groups $G(\ell,1,n)$ or is a primitive group containing reflections of order bigger than $2$, we also have $\kappa(\hh^*)=\hh$ and the duality becomes 
$$d_i+d_{n-i+1}=g+2.$$ We obtain Theorem \ref{main2} via the technique of subsection 8.3 from \cite{BeCh}, as explained below. 

In the second (conjectural) construction of the coinvariant type representation above, a version of a result of \cite{BeCh} plays a key role. Because it will be of use in future work on related problems and the study of the Cherednik algebra itself, we state it separately here (in the body of the paper it is Theorem \ref{spherical invariance}). We write $D(\hh^\circ)$ for the algebra of polynomial coefficient differential operators on the complement $\hh^\circ$ to the set of reflecting hyperplanes for $W$, and $D(\hh^\circ) \rtimes W$ for the algebra of operators generated by it and $W$. We recall that the Cherednik algebra $H_c$ and its spherical subalgebra $e H_c e$, where $e$ is the symmetrizing idempotent of $W$, are both subalgebras (the latter non-unital) of $D(\hh^\circ) \rtimes W$. The following theorem appears as Theorem \ref{spherical invariance} below, where precise definitions and conventions are specified. 

\begin{theorem}
For all $c \in \C$ and $g \in G_W$ we have an equality of (non-unital) subalgebras of the algebra $D(\hh^\circ) \rtimes W$:
$$e H_c e=e H_{g \cdot c} e.$$
\end{theorem}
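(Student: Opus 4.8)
The plan is to realize the spherical subalgebra $eH_ce$ as an algebra of differential operators on $\hh^\circ/W$, or more precisely as a subalgebra of $D(\hh^\circ)^W$, and to show that this subalgebra does not change when $c$ is moved by an element of the Namikawa Weyl group $G_W$. The starting point is the embedding of $H_c$ into $D(\hh^\circ)\rtimes W$ via the Dunkl operators: under this embedding the Dunkl operator $y \mapsto \partial_y + \sum_{H}\sum_j \text{(coefficient depending on } c_{H,j})\,\tfrac{\alpha_H(y)}{\alpha_H}\,e_{H,j}$ specializes, after applying $e$ on both sides, to an honest $W$-invariant differential operator on $\hh^\circ$. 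The key classical input, which I would attribute to the Berest--Chalykh circle of ideas (their subsection 8.3, referenced in the excerpt), is that conjugation by an appropriate product of powers of the linear forms $\alpha_H$ — equivalently, passing to quasi-invariants with a shifted multiplicity — intertwines $eH_ce$ with $eH_{c'}e$ whenever $c$ and $c'$ differ by the relevant shift. So the first step is to recall/set up this twisting-by-quasi-invariants dictionary precisely in the conventions of the paper.

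The second step is to reduce the statement to generators of $G_W$. Since $G_W$ is a product of symmetric groups permuting, for each orbit of hyperplanes, the quantities $c_{H,j}+j/n_H$ with $j$ ranging over $0,\dots,n_H-1$ (and doing so compatibly across an orbit), it suffices to check invariance under the simple transpositions that swap two adjacent values $c_{H,j}+j/n_H \leftrightarrow c_{H,j+1}+(j+1)/n_H$ uniformly over a hyperplane orbit. For such a generator I would exhibit an explicit element $\delta \in \CC(\hh)^\times$ — a product of powers of the $\alpha_H$ over the orbit, with exponents $\pm 1$ dictated by the difference of the two parameters being swapped — and show by a direct computation with Dunkl operators that conjugation by $\delta$ carries the Dunkl embedding of $H_c$ to that of $H_{g\cdot c}$ after applying $e$. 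The point is that $\delta$ is $W$-invariant up to a character, so conjugation by $\delta$ preserves $D(\hh^\circ)^W$ even though it does not preserve $D(\hh^\circ)\rtimes W$ or $H_c$ on the nose; it is only the spherical subalgebra, sitting inside $D(\hh^\circ)^W$, that is stable.

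The third step is to verify that the map $\text{Ad}(\delta)$ is not merely a containment but an equality $eH_ce = eH_{g\cdot c}e$. This follows because $G_W$ is a group: the generator is an involution, so $\text{Ad}(\delta)$ and $\text{Ad}(\delta^{-1})=\text{Ad}(\delta)^{-1}$ give containments in both directions, forcing equality. Alternatively one can note that $eH_ce$ is generated as an algebra by $e\CC[\hh]e$, $e\CC[\hh^*]e$, and $e W e = \CC e$, together with the spherical Euler element, and check the claim on these generators. Finally, one should record that this equality of subalgebras of $D(\hh^\circ)\rtimes W$ is compatible with the identification of the finite Hecke algebra as monodromy of the KZ connection, which is what makes the $G_W$-action on parameters descend to the Hecke algebra and hence produce the KZ twist $\kappa$ used elsewhere in the paper.

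The main obstacle I expect is the explicit Dunkl-operator computation in step two: one must track how conjugation by the fractional-looking product $\delta = \prod \alpha_H^{m_H}$ acts on each term $\tfrac{\alpha_H(y)}{\alpha_H}\,e_{H,j}$ of a Dunkl operator, including the cross-terms where $\alpha_H$ appears in the denominator, and confirm that the shifts in the parameters $c_{H,j}$ produced this way exactly match the permutation prescribed by the chosen generator of $G_W$ — in particular that the combination $c_{H,j}+j/n_H$, rather than $c_{H,j}$ alone, is the quantity being permuted. Keeping careful track of the $e_{H,j}$ idempotents (the Fourier components of the cyclic group fixing $H$) and of the $1/n_H$ corrections is where the bookkeeping is delicate; everything else is formal manipulation inside $D(\hh^\circ)^W$.
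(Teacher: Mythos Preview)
Your approach has a genuine gap: conjugation by an element $\delta \in \CC(\hh)^\times$ yields an \emph{isomorphism} $\delta\,(eH_ce)\,\delta^{-1}=eH_{c'}e$ of subalgebras of $D(\hh^\circ)\rtimes W$, not an \emph{equality} $eH_ce=eH_{c'}e$ of subsets. The theorem asserts literal equality inside $D(\hh^\circ)\rtimes W$ (and the companion statement $f(y_c)e=f(y_{g\cdot c})e$ for $f\in\CC[\hh^*]^W$ is likewise an equality of operators, not a conjugacy). Your step three does not repair this: from $\mathrm{Ad}(\delta)(eH_ce)=eH_{g\cdot c}e$ one gets only $\mathrm{Ad}(\delta^{-1})(eH_{g\cdot c}e)=eH_ce$, which is the same statement read backwards, not a containment $eH_ce\subseteq eH_{g\cdot c}e$. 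Indeed, nontrivial conjugations of this sort do arise in the paper (Theorem~\ref{HO equality}, $\delta\,eH_ce\,\delta^{-1}=fH_{\sigma(c)}f$), but they relate $e$- and $f$-spherical subalgebras and are \emph{deduced from} the equality you are trying to prove, not used to prove it.

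The paper's argument is different in kind. For $c\in\C_\ZZ$ one attaches a multiplicity function $m$ and invokes the Berest--Chalykh identification $eH_ce=D(Q_m)^W e$, where $Q_m\subseteq\CC[\hh^\circ]$ is the ring of $m$-quasi-invariants. The point is then purely combinatorial: the multiplicity functions $m$ and $k$ compatible with $c$ and with $g\cdot c$ satisfy $\lceil (m_{H,i}-i)/n_H\rceil=\lceil (k_{H,i}-i)/n_H\rceil$ for all $H,i$, hence $Q_m=Q_k$ as subspaces of $\CC[\hh^\circ]$, and therefore $eH_ce=D(Q_m)^We=D(Q_k)^We=eH_{g\cdot c}e$. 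This gives the equality at all integral parameters, and a Zariski-density argument (as in \cite{BeCh}, Proposition~5.4) extends it to arbitrary $c\in\C$. No conjugation is involved, and the density step---which your proposal omits entirely---is essential, since the quasi-invariant description is only directly available at integral $c$.
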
 Thus the Namikawa Weyl group $G_W$ preserves not only the finite Hecke algebra, but the spherical subalgebra as well. In Proposition 5.4 of \cite{BeCh}, this same equality is proved for $g$ in a certain cyclic subgroup of $G_W$ and is the key point in their construction of Heckman-Opdam shift functors. We emphasize that no new ideas beyond what is contained in \cite{BeCh} are necessary for the proof of this more general theorem (just very careful book-keeping), and that another proof using different ideas recently appeared---this is Corollary 2.22 of \cite{BLNS} (see also Theorem 3.4 of \cite{Los}, which replaces equality with isomorphism but works in more generality).

\subsection{More mysterious numerology} There is a connection here to a conjecture of Stump \cite{Stu}, that the number of occurrences of the determinant representation of $W$ in the diagonal coinvariant ring for a well-generated complex reflection group $W$ is given by the $W$-Catalan number
$$\mathrm{Cat}(W)=\prod_{i=1}^n \frac{h+d_i}{d_i}.$$ For the groups $G(\ell,1,n)$ and the primitive groups containing reflections of order greater than $2$, our results imply that the number of occurrences of the determinant in the diagonal coinvariant ring is \emph{at least} 
$$\prod_{i=1}^n \frac{g+d_i^*+1}{d_i}$$ where $d_i^*$ are the \emph{codegrees} of $W$. But it turns out that, thanks to another instance of mysteriously favorable numerology, we actually have 

$$g+d_i^*+1=h+d_i \quad \iff \quad d_i-d_i^*-1=g-h \quad \hbox{for all $1 \leq i \leq n$,}$$ for such groups. This coincidence deserve further thought and gives a bit of evidence for Stump's conjecture: for although we have improved our estimation of the diagonal coinvariant ring, the number of occurrences of the determinant representation we have discovered has not increased.

\subsection{An asymptotic version of the $(n+1)^{n-1}$ conjecture} In order to make more concrete our hope that the number $(g+1)^n$ is the principal term in an approximation to $\mathrm{dim}(R_W)$, we state the result for the monomial group $W=G(\ell,m,n)$, for which
$$g=\ell(n-1)+2 (\ell/m-1),$$ more explicitly:
\begin{theorem}
Let $\ell,m$ and $n$ be positive integers with $m$ dividing $\ell$ and let $W=G(\ell,m,n)$ be the group of $n$ by $n$ matrices with entries that are either $0$ or $\ell$th roots of $1$,  so that each row and each column has precisely one non-zero entry, and so that the product of the non-zero entries is an $\ell/m$th root of $1$. Then
$$\mathrm{dim}(R_W) \geq (\ell (n-1)+2 \ell/m-1)^n.$$
\end{theorem}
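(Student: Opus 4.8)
The plan is to read the bound off from Theorem \ref{main}; the only thing to do is to put the quantity $g=2N/n$ into closed form for $W=G(\ell,m,n)$. First I would recall that for $\ell\geq 2$ the group $G(\ell,m,n)$ acts irreducibly on $\hh=\CC^n$ and is a complex reflection group of rank $n$, apart from a couple of small degenerate cases (the reducible group $G(2,2,2)$, and the rank-one groups with $\ell=m$), which are easily dealt with separately; the case $\ell=1$ is the symmetric group $S_n$, for which one passes to the $(n-1)$-dimensional reflection representation and the corresponding bound is Haiman's theorem. So I would assume $\ell\geq 2$, so that Theorem \ref{main} applies and furnishes a quotient of $R_W$ of dimension $(g+1)^n$.

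Next I would count the reflections of $W$. Writing each element as a monomial matrix and running a brief case analysis on its underlying permutation shows that the reflections fall into exactly two families. The \emph{diagonal} reflections are the matrices $\mathrm{diag}(1,\dots,1,\zeta,1,\dots,1)$ with $\zeta\neq 1$; the defining condition on $G(\ell,m,n)$ forces $\zeta$ to be an $(\ell/m)$-th root of unity, so each of the $n$ coordinates contributes $\ell/m-1$ of them, for a total of $n(\ell/m-1)$. The \emph{transposition-type} reflections are the involutions $s_{ij}^{\zeta}$ sending $e_i\mapsto\zeta e_j$, $e_j\mapsto\zeta^{-1}e_i$ and fixing the remaining coordinate vectors, with $1\leq i<j\leq n$ and $\zeta$ an $\ell$-th root of unity; here the product of the two nonzero off-diagonal entries is $1$, so all of these lie in $G(\ell,m,n)$ regardless of $m$, and distinct triples $(i,j,\zeta)$ give distinct reflections, for a total of $\binom{n}{2}\ell$. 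Therefore $N=\binom{n}{2}\ell+n(\ell/m-1)$ and
$$g=\frac{2N}{n}=(n-1)\ell+2\Bigl(\frac{\ell}{m}-1\Bigr)=\ell(n-1)+\frac{2\ell}{m}-2,$$
so that $g+1=\ell(n-1)+2\ell/m-1$; plugging this into $\dim(R_W)\geq(g+1)^n$ yields the asserted inequality.

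Because this is merely a specialization of Theorem \ref{main}, there is no serious obstacle. The two points deserving a little care are (i) the verification that the two families above account for \emph{every} reflection of $G(\ell,m,n)$ --- which holds because a reflection permutes the coordinate lines with at most one nontrivial (necessarily $2$-)cycle, so its reflecting hyperplane is either a coordinate hyperplane or one of the form $x_j=\zeta x_i$ --- and (ii) the bookkeeping for the degenerate small cases noted above. I would remark in passing that for the family $G(\ell,m,n)$ one can alternatively construct the coinvariant-type module $L_c(\mathrm{triv})$ of dimension $(g+1)^n$ by hand, using the combinatorics of \cite{Gri}, \cite{Gri2}, \cite{Gri3} and \cite{FGM}; that route additionally produces an explicit monomial basis and a tractable graded character formula, and is the natural starting point for pushing the lower bound past $(g+1)^n$.
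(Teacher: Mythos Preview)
Your proposal is correct and matches the paper's approach: the theorem is stated in the introduction as the specialization of Theorem \ref{main} to $W=G(\ell,m,n)$, with the value $g=\ell(n-1)+2(\ell/m-1)$ simply recorded, and your reflection count recovers exactly this. Your closing remark about the alternative explicit construction via the Jack basis is also precisely what the paper does in Section 4 (``The principal coinvariant type representation''), where $L_c(\mathrm{triv})$ is built directly and shown to have the right dimension and a unique copy of $\det$.
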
 In fact, we will give a construction of the relevant representation $L_c(\mathrm{triv})$ for these groups $G(\ell,m,n)$ which depends on the techniques from \cite{Gri} and gives somewhat more detailed information on its graded character. 

Now we can make more precise the hope that $(g+1)^n$ is almost the dimension of $R_W$:
\begin{conjecture} Suppose $\ell$ and $m$ are positive integers with $\ell \geq 2$ and $m$ dividing $\ell$. Then
$$\lim_{n \to \infty} \frac{\mathrm{dim}(R_{G(\ell,m,n)})}{(\ell (n-1) + 2\ell/m-1)^n}=1.$$
\end{conjecture}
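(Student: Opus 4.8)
By Theorem~\ref{main}, and specifically the construction of the coinvariant type representation $L_c(\mathrm{triv})$ for $W=G(\ell,m,n)$ via the methods of \cite{Gri} sketched above, one already has $\dim R_{G(\ell,m,n)}\geq (g+1)^n$ with $g+1=\ell(n-1)+2\ell/m-1$, so the ratio in the conjecture is always $\geq 1$. The plan is therefore to prove the matching \emph{asymptotic upper bound} $\dim R_{G(\ell,m,n)}\leq (g+1)^n\bigl(1+o(1)\bigr)$ as $n\to\infty$. Writing $\CC[x,y]=\CC[\hh^*\oplus\hh]$ and fixing a term order $\prec$ on the monomials in $x_1,\dots,x_n,y_1,\dots,y_n$, this amounts to bounding the number of standard monomials for $\mathrm{in}_\prec(I_W)$, which equals $\dim R_W$; since only the leading term in $n$ is required, it is enough to locate the standard monomials up to an asymptotically negligible enlargement, so a complete Gröbner basis of $I_W$ is not needed.

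\textbf{Concrete steps.} First I would assemble explicit elements of $I_W$ with small leading monomials. Beyond the two one-sided coinvariant ideals generated by $\CC[x]^W_+$ and $\CC[y]^W_+$ --- which by themselves give only the hopelessly large bound $|W|^2$ --- the ideal $I_W$ contains all polarized power sums $p_{a,b}=\sum_{i=1}^n x_i^a y_i^b$ with $a+b>0$ and $\ell\mid a-b$ (for $m=1$; for $m>1$ one also adjoins the $S_n$-symmetrizations of torus invariants $\prod_i x_i^{a_i}y_i^{b_i}$ whose total $x$-degree minus $y$-degree is divisible by $\ell/m$), together with all their products and the syzygies among these. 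For a term order in which the $y$-variables dominate the $x$-variables (or a generic perturbation of one), these generators have tractable leading terms, and the combination of the one-sided relations in $x$ and $y$ with the cross relations $p_{a,b}$ should, in parallel with the elementary proof that $\dim\CC[\hh]/J_W=|W|$, confine the standard monomials to an explicit region. Second, I would count the standard monomials in that region, using the $G(\ell,m,n)$ construction of $L_c(\mathrm{triv})$ from \cite{Gri} to read off the target shape: the basis obtained there from nonsymmetric Jack polynomials and intertwining operators already exhibits $(g+1)^n$ monomials --- essentially an $n$-fold product of integer intervals of length $g+1$ --- and the content of the upper bound is that no monomial outside an asymptotically negligible thickening of this box survives modulo $\mathrm{in}_\prec(I_W)$.

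\textbf{The main obstacle.} The essential difficulty is the collapse from the crude bound $|W|^2$ down to $(g+1)^n$, a super-exponential cancellation (of order $|W|$ up to simple exponential factors): one must show that the \emph{mixed} invariants $p_{a,b}$, together with their syzygies against the one-sided invariants, are strong enough to force it. For $W=S_n$ the corresponding statement is Haiman's theorem $\dim R_{S_n}=(n+1)^{n-1}$, whose only known proof passes through the geometry of the Hilbert scheme of points in the plane and the $n!$-theorem; imitating this for $G(\ell,m,n)$ would require the analogous geometry --- Nakajima quiver varieties and Gordon--Stafford-type equivalences for cyclotomic rational Cherednik algebras, plus a wreath-product $n!$-theorem --- which is not currently available, and in any case would presumably yield an \emph{exact} dimension formula rather than just the asymptotics. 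A route adapted to the purely asymptotic claim might instead proceed by stratifying the zero-fiber scheme according to the coincidence pattern of the coordinates $x_i$, using the factorization of $W$-invariants along each stratum to express its contribution to $\dim R_W$ in terms of $\dim R_{G(\ell,m,n')}$ with $n'<n$, and then showing by induction that the strata other than the generic one contribute only $o\bigl((g+1)^n\bigr)$, so that the generic stratum --- where the stabilizers are trivial and the count is far more tractable --- dominates. Making either route work uniformly in $\ell$ and $m$ is the open heart of the conjecture.
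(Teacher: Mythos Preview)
The statement is a \emph{conjecture} in the paper, not a theorem; the paper offers no proof and says explicitly that the evidence ``consists solely of the fact that we have so far been unable to improve the lower bound by anything of the same order of magnitude.'' So there is nothing to compare your proposal against.

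Your write-up is honest about this: you correctly note that the lower bound $\dim R_{G(\ell,m,n)}\geq (g+1)^n$ is already established by the paper, you correctly identify that what is missing is the asymptotic upper bound, and you label the core step as ``the open heart of the conjecture.'' That is an accurate assessment. Neither of the two routes you sketch --- a Gr\"obner/standard-monomial argument or a stratification-and-induction argument --- is carried out, and you do not claim otherwise. The Gr\"obner route in particular would require, even asymptotically, a replacement for the Haiman/Hilbert-scheme machinery that is currently unavailable for $G(\ell,m,n)$, exactly as you say; the stratification idea is plausible heuristics but there is no indication that the generic stratum is any more tractable than the full problem (trivial stabilizer does not by itself bound the scheme-theoretic multiplicity of the zero fiber). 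So your document is a reasonable discussion of possible strategies, but it is not a proof, and the paper does not contain one either.
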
 Admittedly, the evidence for the conjecture is rather thin: it consists solely of the fact that we have so far been unable to improve the lower bound by anything of the same order of magnitude. 

There is another sort of limit one can take to obtain reasonable combinatorics: as suggested by \cite{Ber}, one might work with the analog of the diagonal coinvariant ring for the product of $m$ copies of the reflection representation $\hh^{\times m}$ and let $m$ tend to infinity. But as far as we know there is no connection between the two.

\section{Quotients of $\CC[\hh]$ by systems of parameters}

\subsection{Reflection groups} Throughout this paper we will write $W \subseteq \mathrm{GL}(\hh)$ for an irreducible complex reflection group acting on an $n$-dimensional vector space $\hh$, $R \subseteq W$ for the set of reflections in $W$, and $\A$ for the set of reflecting hyperplanes of $W$. Given $H \in \A$ we let $W_H$ be the pointwise stabilizer of $H$, which is a cyclic reflection subgroup of $W$. We write $\mathrm{det}:W \to \CC^\times$ for the determinant character of $W$, which is the restriction of the determinant on $\mathrm{GL}(\hh)$ to $W$. Putting $n_H=|W_H|$ we write
$$e_{H,j}=\frac{1}{n_H} \sum_{w \in W_H} \mathrm{det}^{-j}(w) w$$ for the primitive idempotents of the group algebra $\CC W_H$. For a $\CC W$-module $E$ we put
$$E_{H,j}=\mathrm{dim}_\CC(e_{H,j} E) \quad \hbox{for $H \in \A$ and $0 \leq j \leq n_H-1$}$$ and call the collection $E_{H,j}$ of numbers the \emph{local data} of $E$. The \emph{Coxeter number} $h$ of $W$ is
$$h=(N+N^*)/n \quad \hbox{where $N=|R|$ and $N^*=|\A|$,}$$ and we also define the number $g=2N/n$ as in the introduction. We will see that $g$ is in fact an integer (when we prove Theorem \ref{main2}). This also follows from Corollary 6.98 of \cite{OrTe}, and the same number appears in Remark 8.10 of \cite{ChDo} in the context of reflection factorizations of Coxeter elements (c.f. Definition 3.1 from \cite{ChDo2}); I thank Theo Douvropoulos for pointing me to these references).

\subsection{$W$-equivariant homogeneous systems of parameters and the Koszul complex} Suppose $E \subseteq \CC[\hh]^d$ is an $n$-dimensional $W$-submodule in the degree $d$ piece $\CC[\hh]^d$ of $\CC[\hh]$ such that the quotient $\CC[\hh]/ \CC[\hh] E$ by the ideal generated by $E$ is finite-dimensional. That is, a basis for $E$ is a homogeneous system of parameters in $\CC[\hh]$. In this case it follows that the Koszul complex
$$0 \to \CC[\hh] \otimes \Lambda^n E \to \CC[\hh] \otimes \Lambda^{n-1} E \to \cdots \to \CC[\hh] \otimes E \to \CC[\hh] \to \CC[\hh]/ \CC[\hh] E \to 0$$ is exact, where the map
$\CC[\hh] \otimes \Lambda^k E \to \CC[\hh] \otimes \Lambda^{k-1} E$ is given by the formula
$$f \otimes e_1 \wedge e_2 \wedge \cdots \wedge e_k \mapsto \sum_{j=1}^k (-1)^{j-1} e_j f \otimes e_1 \wedge \cdots \wedge \widehat{e_j} \wedge \cdots \wedge e_k \quad \hbox{for $f \in \CC[\hh]$ and $e_1,\dots,e_k \in E$},$$ in which the hat over a factor in a product indicates, as usual, that the factor is to be omitted. Evidently these are maps of graded $\CC W$-modules, provided that we equip $\CC[\hh] \otimes \Lambda^k E$ with the grading for which the degree of $f \otimes e_1 \wedge \cdots \wedge e_k$ is 
$$\mathrm{deg}(f  \otimes e_1 \wedge \cdots \wedge e_k)=\mathrm{deg}(f)+kg.$$

\subsection{Graded traces} Suppose $w \in W$ and we fixed eigenbases $x_1,\dots,x_n$ of $\hh^*$ and $e_1,\dots,e_n$ of $E$ for the $w$-action, with $w x_i=\zeta_i x_i$ and $w e_i=\mu_i e_i$ for certain roots of unity $\zeta_i$ and $\mu_i$. Now the expressions $x_{i_1} x_{i_2} \cdots x_{i_k} \otimes e_{j_1} \wedge \cdots \wedge e_{j_m}$ for weakly increasing $1\leq i_1 \leq i_2 \leq \cdots \leq i_k \leq n$ and strictly increasing $1 \leq j _1< j_2 < \cdots j_m \leq n$ are a basis of $\CC[\hh^*]^k \otimes \Lambda^m E$. The trace of $w$ on $\CC[\hh^*]^k \otimes \Lambda^m E$ is therefore
$$\mathrm{tr}(w,\CC[\hh^*]^k \otimes \Lambda^m E)=\sum_{\substack{1 \leq i_1 \leq i_2 \leq \cdots \leq i_k \leq n \\ 1 \leq j_1<j_2 < \cdots < j_m \leq n}}
 \zeta_{i_1} \zeta_{i_2} \cdots \zeta_{i_k} \mu_{j_1} \mu_{j_2} \cdots \mu_{j_m},$$ which is the coefficient $c_{km}$ of $t^k q^m$ in the expansion
 $$\frac{\mathrm{det}(1+qw)}{\mathrm{det}(1-tw)}=\sum_{\substack{0 \leq k < \infty \\ 0 \leq m \leq n}} c_{km} t^k q^m.$$ 
 
\subsection{Reflection representations and amenable representations of $W$} Let $E$ be an irreducible $\CC W$-module of dimension $m$. We say that $E$ is a \emph{reflection representation} of $W$ if each $r \in R$ acts as a reflection on $E$. For $H \in \A$ we put
$$C(H,E)=\sum_{j=0}^{n_H-1} j E_{H,j} \quad \text{where we recall} \quad E_{H,j}=\mathrm{dim}(e_{H,j} E).$$ Following \cite{LeTa}, definition 10.14 and Lemma 10.15, we say $E$ is \emph{amenable} if 
$$C(H,E) \leq n_H-1 \quad \hbox{for all $H \in \A$.}$$ It is immediate (as in Cor. 10.16 of \cite{LeTa}) that if $E$ is a reflection representation then $E$ and $E^*$ are amenable. The important point for us is the following, which is Theorem 10.18 of \cite{LeTa}:
\begin{theorem}
Let $E$ be an amenable $\CC W$-module of dimension $m$ with exponents $e_1,\dots,e_m$. Then there are homogeneous elements $\omega_1,\dots,\omega_m \in (\CC[\hh] \otimes E^*)^W$ of degrees $e_1,\dots,e_m$ such that 
$$(\CC[\hh] \otimes \Lambda^\bullet E^*)^W=\bigoplus_{1 \leq i_1 < i_2 < \cdots < i_p \leq m} \CC[\hh]^W \omega_{i_1} \omega_{i_2} \cdots \omega_{i_m}.$$ 
\end{theorem}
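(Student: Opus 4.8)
The plan is to recognize the claimed decomposition as the assertion that one natural multiplication map between two graded free modules over the polynomial ring $S := \CC[\hh]^W$ is an isomorphism, and to prove this by a codimension-one argument (in the spirit of Saito's criterion) in which amenability is used precisely at the reflecting hyperplanes.

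First I would assemble the two modules. By the Shephard--Todd--Chevalley theorem $\CC[\hh]$ is a free $S$-module and the coinvariant ring $\CC[\hh]/J_W$ is the regular representation of $W$; hence $M := (\CC[\hh] \otimes E^*)^W = \Hom_W(E, \CC[\hh])$ is a free $S$-module of rank $m$, and any homogeneous $S$-basis $\omega_1, \dots, \omega_m$ of it has degrees equal to the exponents $e_1, \dots, e_m$ of $E$ (the degrees in which $E$ occurs in $\CC[\hh]/J_W$). Similarly, applying the averaging idempotent $\frac{1}{|W|}\sum_{w \in W} w$ to the free $S$-module $\CC[\hh] \otimes \Lambda^\bullet E^*$ exhibits $N := (\CC[\hh] \otimes \Lambda^\bullet E^*)^W$ as a direct summand of a free module, hence graded free; its degree-$p$ part $N_p = \Hom_W(\Lambda^p E, \CC[\hh])$ is free of rank $\binom{m}{p}$. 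Multiplication in the bigraded ring $\CC[\hh] \otimes \Lambda^\bullet E^*$ makes $N$ a graded $S$-algebra, and $\omega_i \mapsto \omega_i$ extends to a homogeneous $S$-algebra map $\Phi = \bigoplus_p \Phi_p$ with $\Phi_p : \Lambda^p_S M \to N_p$. Since $\Lambda^\bullet_S M$ is free of rank $2^m = \sum_p \binom{m}{p}$, this $\Phi$ is a homogeneous map between graded free $S$-modules of equal rank, and the theorem is exactly the statement that $\Phi$ is an isomorphism.

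Next I would reduce to a codimension-one check. Since $S$ is a polynomial ring (a factorial, normal domain), $\Phi_p$ is an isomorphism iff $\det \Phi_p$ is a nonzero constant, iff $\det \Phi_p$ is nonzero and vanishes along no height-one prime of $S$; equivalently, iff $\Phi_p$ becomes an isomorphism after localizing at every height-one prime. Over a prime lying in the (open, dense) locus where $\hh \to \hh/W$ is étale, an étale base change on $S$ splits $\CC[\hh]$ into $|W|$ copies of the base with $W$ permuting the factors, and $\Phi_p$ is then visibly the tautological wedge isomorphism; in particular this already shows $\det \Phi_p \neq 0$, so that only the height-one primes attached to $W$-orbits of reflecting hyperplanes remain. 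Fixing a representative hyperplane $H$, one localizes $S$ at the corresponding prime, passes to the completion, and uses that the ramification is controlled by the cyclic inertia group $W_H$ of order $n_H$ acting on the normal coordinate $\alpha_H$ (the further quotient $N_W(H)/W_H$, with $N_W(H)$ the setwise stabilizer of $H$, contributes only an unramified residue-field extension and may be ignored). This reduces matters to an explicit computation over a cyclic group: decomposing $E^*$ and each $\Lambda^p E^*$ into $W_H$-isotypic pieces, the local generators of $M$, of $\Lambda^p_S M$, and of $N_p$ sit at $\alpha_H$-adic heights that one writes down directly in terms of the local data $E_{H,j}$, and $\det \Phi_p$ is nonvanishing along this prime exactly when no relevant subset-sum of those heights needs to be reduced modulo $n_H$ --- which is precisely the bound $C(H, E) \le n_H - 1$, i.e.\ amenability. (A strictly larger value of $C(H,E)$ forces, for a suitable $p$, two local generators of $\Lambda^p_S M$ to land on a common local generator of $N_p$ up to a positive power of $\alpha_H$, so $\det \Phi_p$ acquires a zero along that discriminant component and $\Phi_p$ fails to be surjective.)

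The main obstacle will be this last step --- the local analysis at the ramified height-one primes. It is the only place amenability is used, and carrying it out requires both the inertia reduction and the combinatorial matching of the numbers $E_{H,j}$, $C(H,E)$, $n_H$ against the degrees of local generators; everything else (the freeness statements, the rank count, and the generic triviality of $\Phi$) is formal. Once $\Phi$ is known to be an isomorphism the stated decomposition is immediate, and one obtains as a bonus the bigraded identity $\prod_{i=1}^m (1 + q\,t^{e_i}) = \sum_{p=0}^m q^p f_{\Lambda^p E}(t)$, where $f_{\Lambda^p E}(t)$ records the degree-wise multiplicity of $\Lambda^p E$ in $\CC[\hh]/J_W$; in particular the sum of the exponents of $\Lambda^p E$ equals $\binom{m-1}{p-1}\sum_{i=1}^m e_i$ for each $p$.
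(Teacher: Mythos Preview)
The paper does not give its own proof of this statement; it is quoted as Theorem 10.18 of Lehrer--Taylor \cite{LeTa} and used as a black box. Your proposal is a correct and complete outline, and it is essentially the standard Saito-criterion argument that \cite{LeTa} employ: once $\Phi:\Lambda_S^\bullet M\to N$ is set up as a homogeneous map between graded free $S$-modules of equal rank, the only content is the check at the height-one primes lying over $W$-orbits of reflecting hyperplanes, where everything reduces to the cyclic group $W_H$. There the local generators of $M$ sit at $\alpha_H$-heights $j$ with multiplicity $E_{H,j}$, a $p$-fold wedge of them sits at the sum of those heights, while the corresponding local generator of $N_p$ sits at that sum reduced modulo $n_H$; so $\Phi_p$ is a local isomorphism precisely when no subset sum reaches $n_H$, and since the largest subset sum is $\sum_j j\,E_{H,j}=C(H,E)$, this is exactly the amenability bound $C(H,E)\le n_H-1$. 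Your placement of the hypothesis and the bonus fake-degree identity for the exterior powers are both correct.
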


\subsection{The determinant appears exactly once} \label{det mult one} We suppose we have an occurrence of an $n$-dimensional representation $E$ in degree $g+1$ of $\CC[\hh]$, with the property that the quotient $L=\CC[\hh]/ E \CC[\hh]$ of $\CC[\hh]$ by the ideal generated by $E$ is finite-dimensional. We further suppose that $E$ is an $n$-dimensional irreducible reflection representation of $W$ satisfying 
$$g+1=d_i+e_{n-i+1} \quad \hbox{for $1 \leq i \leq n$, where $e_1 \leq \cdots \leq e_n$ are the exponents of $E$.}$$ Then using the previous material on the Koszul resolution and arguing as in Theorem 3.2 of \cite{Gri} shows that there is a single occurrence of the determinant representation of $W$ in $L$, which occurs in degree $e_1+e_2+\dots+e_n$. Later, we will use this to establish the hypotheses of Lemma \ref{CT lemma} below.

\section{Proof of Theorem \ref{main}: the Cherednik algebra, the Hecke algebra, and KZ twists}

\subsection{Outline} In this section we first give the definitions of Cherednik and Hecke algebras corresponding to $W$, and then present what we believe to be the natural level of generality for the beautiful constructions of Berest-Chalykh \cite{BeCh}. In the level of generality we need, this latter material is technically new but requires no new ideas, so we omit the proofs whenever they are completely parallel to those of Berest-Chalykh. We finish by deducing Theorem \ref{main} from these ingredients.

\subsection{The parameter space} We write $\C$ for the set of tuples $c=(c_{H,j})_{H \in \A, 0 \leq j \leq n_H-1}$ of complex numbers $c_{H,j} \in \CC$ indexed by pairs $(H,j)$ consisting of a reflecting hyperplane $H$ for $W$ and an integer $0 \leq j \leq n_H-1$, subject to the condition
$$c_{H,j}=c_{w(H),j} \quad \hbox{for all $w \in W$, $H \in \A$, and $0 \leq j \leq n_H-1$.}$$ Thus $\C$ is a finite-dimensional $\CC$-vector space. We put
$$\C_\ZZ=\{c \in \C \ | \ c_{H,j} \in \ZZ \quad \hbox{for all $H \in \A$ and $0 \leq j \leq n_H-1$} \}.$$

\subsection{The Dunkl operators} Given $c \in \C$ and $y \in \hh$, we define the \emph{Dunkl operator} $y_c \in D(\hh^\circ) \rtimes W$ by the formula
$$y_c(f)=\partial_y(f)-\sum_{H \in \A} \frac{\alpha_H(y)}{\alpha_H} \sum_{j=0}^{n_H-1} n_H c_{H,j} e_{H,j}$$ where we have fixed $\alpha_H \in \hh^*$ with zero set equal to $H$. We note that since we do not require $c_{H,0}=0$, these Dunkl operators do not necessarily preserve the space of polynomial functions. They do, however, commute with one another, and preserve the space $\CC[\hh^\circ]$ of polynomial functions on $\hh^\circ$.

\subsection{The rational Cherednik algebra} Given $c \in \C$, the \emph{rational Cherednik algebra} $H_c$ is the subalgebra of $D(\hh^\circ) \rtimes W$ generated by $\CC[\hh]$, the group $W$, and the Dunkl operators $y_c$ for all $y \in \hh$. It has a triangular decomposition
$$H_c \cong \CC[\hh] \otimes \CC W \otimes \CC[\hh^*],$$ where we identify $\CC[\hh^*]$ with the subalgebra of $H_c$ generated by the Dunkl operators. Moreover, taking $\delta=\prod \alpha_r$, adjoining the inverse of $\delta$ to $H_c$ gives the algebra $H_c[\delta^{-1}]=D(\hh^\circ) \rtimes W$, independent of $c \in \C$.

\subsection{Category $\OO_c$} The category $\OO_c$ is the full subcategory of $H_c$-mod consisting of finitely-generated $H_c$-modules on which each Dunkl operator $y_c$ acts locally nilpotently. Among the objects of $\OO_c$ are the \emph{standard modules} $\Delta_c(E)$, indexed by $E \in \mathrm{Irr}(\CC W)$ and defined by
$$\Delta_c(E)=\mathrm{Ind}_{\CC[\hh^*] \rtimes W}^{H_c}(E),$$ where $\CC[\hh^*] \rtimes W$ is the subalgebra of $H_c$ generated by $W$ and the Dunkl operators, which act on $E$ by $0$.

\subsection{Coinvariant type representations} We recall from \cite{AjGr} that a \emph{coinvariant type} representation of $H_c$ is an irreducible $H_c$-module $L$ such that upon restricting $L$ to the group algebra $\CC W$, the determinant representation of $W$ occurs with multiplicity one in $L$. Each such representation carries a canonical filtration: take the filtration on $H_c$ defined by placing $\hh^*$ and $\hh$ in degree $1$ and $W$ in degree $0$, and define
$$L^{\leq d}=H_c^{\leq d} L^{\mathrm{det}}$$ where $L^{\mathrm{det}}$ is the isotypic component of $L$ for the determinant representation. The following lemma is the key point relating coinvariant type representations of $H_c$ to the diagonal coinvariant ring $R_W$. The proof is straightforward but we include it because of the central role it plays in all that follows. We define a somewhat unusual bigrading on $R_W$ as follows: take $f$ to be homogeneous of bidegree $(a,b)$ if it is of total degree $a$ in the $x$'s and $y$'s and if its $x$ degree minus its $y$ degree is $b$ (this second grading is compatible with the Euler grading on $H_c$ and $L$).
\begin{lemma} \label{CT lemma}
Let $L$ be a coinvariant type representation of $H_c$ and let $\delta$ be a basis element of $L^{\mathrm{det}}$. The map $\mathrm{gr}(H_c) \to \mathrm{gr}(L)$ defined by $f \mapsto f \cdot \delta$ induces a surjective map of bigraded $\CC W$-modules $R_W \otimes \mathrm{det} \to \mathrm{gr}(L)$. 
\end{lemma}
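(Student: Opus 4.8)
The plan is to track the three structures in play—the filtration on $H_c$, the $H_c$-module structure on $L$, and the ideal $I_W$ defining $R_W$—through the map $f \mapsto f\cdot\delta$, checking successively that it is defined on the associated graded, surjective, $\CC W$-equivariant (after the twist by $\mathrm{det}$), and bigraded. First I would set up the domain: using the triangular decomposition $H_c \cong \CC[\hh]\otimes\CC W\otimes\CC[\hh^*]$ and the filtration placing $\hh$ and $\hh^*$ in degree $1$ and $W$ in degree $0$, one has $\mathrm{gr}(H_c)\cong \CC[\hh^*\oplus\hh]\rtimes W$ as a bigraded algebra, where the Euler (second) grading records $x$-degree minus $y$-degree. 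Since $\delta$ spans the one-dimensional space $L^{\mathrm{det}}$ and $L^{\leq d}=H_c^{\leq d}L^{\mathrm{det}}$ by definition, the map $h\mapsto h\delta$ from $H_c^{\leq d}\to L^{\leq d}$ is surjective for every $d$; passing to associated graded it induces a surjection $\mathrm{gr}(H_c)\to\mathrm{gr}(L)$, and because $L$ is irreducible and generated over $H_c$ by $\delta$, surjectivity is automatic. The content is therefore to identify the kernel.

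The key step is to show that the ideal of $\mathrm{gr}(H_c)\cong\CC[\hh^*\oplus\hh]\rtimes W$ annihilating $\mathrm{gr}(\delta)$ contains (the image of) $I_W$. Here I would argue as follows: $\delta$ transforms under $W$ by $\mathrm{det}$, so for any $W$-invariant polynomial $p\in\CC[\hh^*\oplus\hh]^W_+$ of positive degree, the element $p\cdot\delta$ lies again in the $\mathrm{det}$-isotypic component of $L$ but in strictly higher filtration degree; since that isotypic component is one-dimensional and concentrated in a single filtration degree (namely degree $0$, occupied by $\delta$ itself), we must have that the symbol of $p\delta$ vanishes in $\mathrm{gr}(L)$, i.e. $p$ maps to $0$. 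More precisely, one works with a $W$-invariant complement to $\CC W_+$ so that $p$ can be taken to act through its symbol; the positive-degree invariants generate $I_W$, so the induced map factors through $\CC[\hh^*\oplus\hh]/I_W = R_W$. Twisting the source by $\mathrm{det}$ makes the map $W$-equivariant because $\delta$ itself carries the character $\mathrm{det}$: the composite $R_W\otimes\mathrm{det}\to\mathrm{gr}(H_c)\otimes\mathrm{det}\xrightarrow{\,\cdot\delta\,}\mathrm{gr}(L)$ intertwines the $W$-actions. Finally, the bigrading is respected by construction: total degree in $x,y$ corresponds to the order filtration, and $x$-degree minus $y$-degree corresponds to the Euler grading, which is compatible with the Euler grading on $L$ as noted in the statement.

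The main obstacle I anticipate is the kernel computation, specifically the claim that $p\cdot\delta$ has vanishing symbol for every positive-degree $W$-invariant $p$. One has to be careful that the filtration degree genuinely increases: an invariant of total degree $a$ sends $L^{\leq d}$ into $L^{\leq d+a}$, and one must rule out the symbol being a nonzero element of $\mathrm{gr}^{d+a}(L)$ lying in the $\mathrm{det}$-isotypic piece. This is where coinvariant-type (multiplicity one for $\mathrm{det}$) is used decisively: the $\mathrm{det}$-isotypic component of the \emph{graded} module $\mathrm{gr}(L)$ is still one-dimensional, hence supported in a single degree, so any $\mathrm{det}$-isotypic element of positive filtration degree has zero symbol. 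Once this is pinned down—perhaps phrased via the PBW basis and the observation that $e\cdot H_c\cdot\delta$ is forced to be spanned by $\delta$ where $e$ is the $\mathrm{det}$-idempotent—the rest is bookkeeping. The surjectivity and equivariance are formal, and the bigrading statement is a matter of matching the two gradings explicitly.
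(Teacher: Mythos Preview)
Your proposal is correct and follows essentially the same route as the paper: use the multiplicity-one hypothesis to see that $\mathrm{det}$ cannot appear in $\mathrm{gr}^{>0}(L)$, so positive-degree invariants act by zero on the symbol of $\delta$ and the map factors through $R_W$; then surjectivity, $W$-equivariance after the $\mathrm{det}$-twist, and the bigrading are bookkeeping. The one point the paper makes slightly more explicit than you do is that surjectivity from the polynomial subalgebra $\CC[\hh^*\oplus\hh]\subseteq\mathrm{gr}(H_c)$ (rather than from all of $\mathrm{gr}(H_c)$) uses $\CC W\cdot\delta=\CC\delta$, so that $L=\CC[\hh]\CC[\hh^*]\CC W\cdot\delta=\CC[\hh]\CC[\hh^*]\cdot\delta$; you have this implicitly in your remark that $\delta$ carries the character $\mathrm{det}$.
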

\begin{proof}
Since $\mathrm{dim}(L^{\mathrm{det}})=1$ is equal to the multiplicity of the determinant representation in $L^{\leq 0}=\CC \delta$, it follows that $\mathrm{det}$ does not occur in $L^{\leq d} / L^{\leq d-1}$ for any $d>0$. Hence if $f \in \CC[\hh^* \oplus \hh]^W$ is homogeneous of positive degree, then working in $\mathrm{gr}(L)$ we have $f \cdot \delta=0$. Therefore the map $\CC[\hh^* \oplus \hh] \to \mathrm{gr}(L)$ defined by $f \mapsto f \cdot \delta$ factors through $R_W$. Since $L$ is irreducible and $\CC W \cdot \delta=\CC \delta$ we have $$L=H_c \cdot \delta=\CC[\hh] \CC[\hh^*] \CC W \cdot \delta=\CC[\hh] \CC[\hh^*] \cdot \delta,$$ which implies that the map $f \mapsto f \cdot \delta$ is surjective. Tensoring $R_W$ by $\mathrm{det}$ makes it $W$-equivariant, and observing that $\delta$ is homogeneous (of a certain degree $k$) for the Euler grading on $L$ implies that it is a bigraded map sending $f$ of bidegree $(a,b)$ to an element of bidegree $(a,b+k)$. 
\end{proof}

\subsection{The fiber functors and the braid group} Given an object $M \in \OO_c$ and a point $p \in \hh$, we define the \emph{fiber} of $M$ at $p$ to be the vector space $$M(p)=M/I(p) M,$$ where $I(p) \subseteq \CC[\hh]$ is the ideal of functions vanishing at $p$. In fact, $M(p)$ is a finite-dimensional $\CC W_p$-module. In general the functor $M \mapsto M(p)$ is only right-exact. 

Writing $\delta=\prod_{r \in R} \alpha_r$ we put $$M^\circ=\CC[\hh][\delta^{-1}] \otimes_{\CC[\hh]} M.$$ The functor $M \mapsto M^\circ$ is exact, and in fact $M^\circ$ is a $H_c[\delta^{-1}]=D(\hh^\circ) \rtimes W$-module which is finitely generated as a $\CC[\hh^\circ]=\CC[\hh][\delta^{-1}]$-module. That is, $M^\circ$ is a $W$-equivariant vector bundle on $\hh^\circ$ equipped with a $W$-equivariant flat connection. Since $M(p)=M^\circ(p)$ for $p \in \hh^\circ$, it therefore follows that the fiber functor $M \mapsto M(p)$ is exact for $p \in \hh^\circ$, and the braid group $B_W=\pi_1(\hh^\circ/W,p)$ acts by automorphisms on this fiber functor.

\subsection{The Hecke algebra and the KZ functor} In fact the braid group action factors by the Hecke algebra $\mathcal{H}_c$, which is the quotient of the group algebra $\CC B_W$ by the relations
$$0=\prod_{j=0}^{n_H-1} (T_H-e^{2 \pi i (j/n_H+c_{H,j})}) \quad \hbox{for all $H \in \A$, }$$ where $T_H$ is a generator of monodromy around $H$. We will write $\mathrm{KZ}(M)=M(p)$ for the fiber $M(p)$ regarded as an $\mathcal{H}_c$ module, and refer to $M \mapsto \mathrm{KZ}(M)$ as the \emph{Knizhnik-Zamolodchikov functor} or \emph{KZ functor} for short. Vale \cite{Val2} proved (see also \cite{BeCh}, Theorem 6.6) that $\mathcal{H}_c$ is semi-simple if and only if $\OO_c$ is a semi-simple category, which happens exactly when each standard module $\Delta_c(E)$ is irreducible.

\subsection{The group $G_W$} It follows from the definition of $\mathcal{H}_c$ that if $c$ is a parameter such that for each $H \in A$, the numbers $j/n_H+c_{H,j}$ are a permutation of the numbers $j/n_H$ (for $0 \leq j \leq n_H-1$) modulo $\ZZ$, then $\mathcal{H}_c \cong \CC W$ is isomorphic to the group algebra of $W$. More generally, two parameters $c,c'$ give the same Hecke algebra provided the multisets
$$\{j/n_H+c_{H,j} \ \mathrm{mod} \ \ZZ \ | \ 0 \leq j \leq n_H-1 \} \quad \text{and} \quad \{j/n_H+c'_{H,j} \ \mathrm{mod} \ \ZZ \ | \ 0 \leq j \leq n_H-1 \}$$ are equal for all $H \in \A$.

This may be rephrased as follows: define $\rho \in \C$ by
$$\rho_{H,j}=j/n_H$$ and the group $G_W$ by
$$G_W=\{(s_H)_{H \in \A} \in \prod_{H \in \A} \mathrm{Sym}(\{0,1,\dots,n_H-1 \}) \ | \ s_{H}=s_{w(H)} \quad \hbox{for all $H \in \A$ and $w \in W$} \}.$$ Thus an element of $G_W$ may be regarded as a list of permutations $s_H$ of $\{0,1,2,\dots,n_H-1\}$, one for each $W$-orbit on $\A$. By construction $G_W$ acts on $\C$, but the interesting action for us is the \emph{dot action} of $G_W$ on $\C$, which is defined by the formula
$$s \cdot c=s(c+\rho)-\rho \quad \hbox{for $s \in G_W$ and $c \in \C$.}$$  Recalling the lattice $\C_\ZZ$ of integral parameters, the semi-direct product group $\C_\ZZ \rtimes G_W$ acts on $\C$, and the quotients $\mathcal{H}_c$ and $\mathcal{H}_{g(c)}$ are equal for all $c \in \C$ and $g \in \C_\ZZ \rtimes G_W$. As we will never again use any other action of $G_W$ on $\C$, in all formulas below we will drop the dot.

\subsection{The KZ twists} \label{KZ twists} By the preceding observations, the Hecke algebras $\mathcal{H}_{g(0)}$ for $g \in \C_\ZZ \rtimes G_W$ are all equal to $\CC W$ and the KZ functor gives an equivalence $\mathrm{KZ}_{g(0)}: \OO_c \to \CC W \mathrm{-mod}$ for all such $g$. We obtain:
\begin{lemma}
Let $g \in \C_{\ZZ} \rtimes G_W$. Then there is a unique permutation $\kappa_g^{-1}$ of $\mathrm{Irr}(\CC W)$ such that
$$\mathrm{KZ}_{g(0)}(\Delta_{g(0)}(E)) \cong \kappa_g^{-1}(E) \quad \hbox{in $\CC W$-mod for all $E \in \mathrm{Irr}(\CC W)$.}$$ 
\end{lemma}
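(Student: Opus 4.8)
The plan is to prove the existence and uniqueness of the permutation $\kappa_g^{-1}$ by exhibiting it explicitly as the permutation that the category equivalence $\mathrm{KZ}_{g(0)}$ induces on the set of simple objects, and then tracking that equivalence on standard modules.

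First I would recall that, by the preceding subsection, for every $g \in \C_\ZZ \rtimes G_W$ the parameter $g(0)$ has the property that the multisets $\{j/n_H + (g(0))_{H,j} \bmod \ZZ\}$ equal $\{j/n_H \bmod \ZZ\}$ for every $H \in \A$, so that $\mathcal{H}_{g(0)} \cong \CC W$ and, by Vale's theorem quoted above, the category $\OO_{g(0)}$ is semisimple and $\mathrm{KZ}_{g(0)} \colon \OO_{g(0)} \to \CC W\text{-mod}$ is an equivalence of categories. (Strictly speaking the functor is defined on $\OO_c$ for the relevant base parameter, but since all these Hecke algebras coincide with $\CC W$ the target category is unambiguous; I would phrase the statement for $c = g(0)$.) Because $\mathrm{KZ}_{g(0)}$ is an equivalence, it carries the complete list of pairwise non-isomorphic simple objects of $\OO_{g(0)}$ to the complete list of pairwise non-isomorphic simples of $\CC W\text{-mod}$, i.e. to $\mathrm{Irr}(\CC W)$, bijectively. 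When $\OO_{g(0)}$ is semisimple the standard modules $\Delta_{g(0)}(E)$ are exactly the simple objects, indexed bijectively by $E \in \mathrm{Irr}(\CC W)$; hence $E \mapsto \mathrm{KZ}_{g(0)}(\Delta_{g(0)}(E))$ is a composite of two bijections $\mathrm{Irr}(\CC W) \to \mathrm{Irr}(\CC W)$ and is therefore itself a bijection of $\mathrm{Irr}(\CC W)$. Define $\kappa_g^{-1}$ to be this bijection; then the displayed isomorphism holds by construction, and uniqueness is immediate since a permutation of a set is determined by its values, and the isomorphism class of $\mathrm{KZ}_{g(0)}(\Delta_{g(0)}(E))$ pins down $\kappa_g^{-1}(E)$ for each $E$.

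The one genuine point that needs care — the \emph{main obstacle}, though it is a mild one — is justifying that when $\OO_{g(0)}$ is semisimple the standard modules are precisely the simple modules, with no repetitions and no further simples. The no-repetition part and the "these are all" part follow from the general highest-weight structure of $\OO_c$: the standard modules $\{\Delta_c(E)\}_{E \in \mathrm{Irr}(\CC W)}$ are the standard objects of a highest weight category, each $\Delta_c(E)$ has simple head $L_c(E)$, the $L_c(E)$ are pairwise non-isomorphic and exhaust the simples, and semisimplicity of the category forces $\Delta_c(E) = L_c(E)$ for all $E$. One should also note that $\mathrm{KZ}_{g(0)}$ being an equivalence (not merely exact and essentially surjective) is exactly the content cited from Vale \cite{Val2}/Berest–Chalykh \cite{BeCh} in the semisimple case — an equivalence automatically preserves the "simple" and "pairwise non-isomorphic" properties, which is all that is used. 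I would write this up in a few lines, citing the standard references for category $\OO_c$ for the highest-weight facts and the already-quoted semisimplicity criterion, rather than reproving anything.

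Finally, for bookkeeping with the notation $\kappa_g^{-1}$ rather than $\kappa_g$: I would simply remark that indexing by the inverse is a convention chosen so that composition works out contravariantly — precisely, so that $\kappa_{gg'} = \kappa_g \kappa_{g'}$, which will be convenient later and matches the direction in which $\C_\ZZ \rtimes G_W$ acts — but for the statement of this lemma it is harmless and I would not dwell on it here beyond possibly a one-sentence parenthetical.
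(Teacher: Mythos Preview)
Your proposal is correct and follows essentially the same approach as the paper: the paper does not give a separate proof but simply writes ``We obtain:'' after noting that $\mathcal{H}_{g(0)}=\CC W$ and that KZ is an equivalence in the semisimple case, leaving the reader to draw exactly the conclusion you spell out. Your only addition is to make explicit why $\Delta_{g(0)}(E)=L_{g(0)}(E)$ in the semisimple case, which the paper has already folded into its citation of Vale's criterion (``which happens exactly when each standard module $\Delta_c(E)$ is irreducible'').
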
 We refer to $\kappa_g^{-1}$ as the \emph{KZ twist} associated with $g$. The particular case in which $g \in \C_\ZZ$ has been studied previously by Opdam \cite{Opd} and Berest-Chalykh \cite{BeCh}. Just as in \cite{BeCh} Corollary 7.12, the map $g \mapsto \kappa_g$ is a homomorphism from $\C_\ZZ \rtimes G_W$ to the group of permutations of $\mathrm{Irr}(\CC W)$ (here we note that the inverse appears in $\kappa_g^{-1}$ in order that this defines a homomorphism), and as in \cite{BeCh} Theorem 7.11 we have more generally:
\begin{lemma} \label{KZ twist lemma}
For $c \in \C^\circ$ regular and $g \in \C_\ZZ \rtimes G_W$ we have $$\mathrm{KZ}_{g(c)}(\Delta_{g(c)}(E)) \cong \mathrm{KZ}_c(\Delta_c(\kappa_g^{-1}(E))).$$
\end{lemma}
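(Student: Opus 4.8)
The plan is to deduce this from the existence, for each $c \in \C^\circ$ and each generator $g$ of $\C_\ZZ \rtimes G_W$, of a \emph{shift functor} $\Phi_g \colon \OO_c \to \OO_{g(c)}$ enjoying two properties: (i) $\mathrm{KZ}_{g(c)} \circ \Phi_g \cong \mathrm{KZ}_c$ as functors to $\mathcal{H}_c\text{-mod} = \mathcal{H}_{g(c)}\text{-mod}$, and (ii) $\Phi_g(\Delta_c(F)) \cong \Delta_{g(c)}(\kappa_g(F))$ for all $F \in \mathrm{Irr}(\CC W)$. Given these, setting $F = \kappa_g^{-1}(E)$ in (ii), applying $\mathrm{KZ}_{g(c)}$, and invoking (i) proves the statement for generators $g$. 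To pass to arbitrary $g$, one uses three facts: that $\C_\ZZ \rtimes G_W$ is generated by the integral translations in $\C_\ZZ$ together with the adjacent transpositions in the factors of $G_W$ (one factor $\mathrm{Sym}(\{0,\dots,n_H-1\})$ per $W$-orbit of hyperplanes); that $\C^\circ$ is $\C_\ZZ \rtimes G_W$-stable, since $\mathcal{H}_{g(c)} = \mathcal{H}_c$ and semisimplicity of $\OO_c$ is detected by $\mathcal{H}_c$; and that $g \mapsto \kappa_g$ is a homomorphism. Writing $g = g_1 g'$ with $g_1$ a generator, one applies the generator case at the regular parameter $g'(c)$, then the inductive hypothesis at $c$, and reads off the answer using $\kappa_g^{-1} = \kappa_{g'}^{-1} \kappa_{g_1}^{-1}$.

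For $g \in \C_\ZZ$ this is Theorem 7.11 of \cite{BeCh} (and, for the symmetric group, Opdam \cite{Opd}): after localizing, $\Delta_c(F)^\circ$ is a $W$-equivariant vector bundle with flat connection on $\hh^\circ$, hence a module over $H_c[\delta^{-1}] = D(\hh^\circ)\rtimes W$, and conjugating the $D(\hh^\circ)\rtimes W$-action by the (single-valued) product of powers of the $\alpha_H$ weighted by the idempotents $e_{H,j}$ that realizes the integral shift $c \mapsto c+m$ identifies it with $\Delta_{c+m}(F')^\circ$. Property (i) holds because the twist is by a logarithmic form with integral periods around the hyperplanes and so leaves the monodromy unchanged; the relabelling $F' = \kappa_m(F)$ is $F$ tensored with the fixed $W$-character on the lowest homogeneous piece of the twisting section, hence is independent of $c$, and specializing to $c=0$ — where $\mathrm{KZ}_0(\Delta_0(F)) \cong F$ and the assertion is the definition of the KZ twist — identifies it with $\kappa_m$.

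For $g$ an adjacent transposition in a factor of $G_W$ — equivalently, for all $g \in G_W$ — this is the new point, and it is here that Theorem \ref{spherical invariance} enters. The equality $eH_ce = eH_{g(c)}e$ inside $D(\hh^\circ)\rtimes W$ lets one transport through the spherical subalgebra: on the non-aspherical locus — the set of $c$ for which $M \mapsto eM$ is an equivalence between $\OO_c$ and the corresponding category of $eH_ce$-modules, which is dense and open in $\C$ and contains $0$ and all of $\C^\circ$ — one sets $\Phi_g(M) = H_{g(c)}e \otimes_{eH_{g(c)}e} eM$, regarding $eM$ as an $eH_{g(c)}e = eH_ce$-module. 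Property (i) is book-keeping with localization: the KZ functor factors through $M \mapsto eM^\circ$, a module over the common localized spherical algebra $e(D(\hh^\circ)\rtimes W)e$, and $e\Phi_g(M)^\circ = eM^\circ$ there, so the fibers at a regular point and the $B_W$-actions on them agree — this is precisely the argument of Proposition 5.4 of \cite{BeCh}, now run with all of $G_W$ in place of a cyclic subgroup. For (ii), on $\C^\circ$ the equivalence $\Phi_g$ carries the simple standards $\Delta_c(F)$ to simple standards $\Delta_{g(c)}(\pi_g(F))$; since $\Phi_g$ is the restriction of one functor defined on the connected non-aspherical locus, $\pi_g$ is locally constant, hence constant, in $c$; and specializing to $c=0$, the identities $\mathrm{KZ}_{g(0)}(\Delta_{g(0)}(\pi_g(F))) \cong \mathrm{KZ}_0(\Delta_0(F)) \cong F$ and $\mathrm{KZ}_{g(0)}(\Delta_{g(0)}(E)) \cong \kappa_g^{-1}(E)$ force $\pi_g = \kappa_g$.

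The reduction to generators and the $\C_\ZZ$ case are routine adaptations of \cite{BeCh}; the main obstacle is the previous paragraph, and within it the delicate step is checking that the spherical-transport functor $\Phi_g$ for $g \in G_W$ really does send standard modules to standard modules, with a relabelling independent of $c$ — spherical functors do not in general preserve standards. This will require either a direct proof that $\Phi_g$ is highest-weight exact throughout the non-aspherical locus, or an explicit description of $\Phi_g$ on standard modules in the style of the quasi-invariant construction of Section 5 of \cite{BeCh} (which one assembles for a general element of $G_W$ by reducing each $\mathrm{Sym}(\{0,\dots,n_H-1\})$ to its generators). Once this is in place, the rest is formal.
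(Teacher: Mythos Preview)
The paper gives no proof of this lemma, deferring to Theorem 7.11 of \cite{BeCh} with the remark that the argument is completely parallel. That argument is a direct rigidity argument on $\C^\circ$ and needs no shift functors; your route through spherical transport for $g \in G_W$ is a detour, and the gap you flag at the end is real.

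The direct argument runs as follows. Since $\mathcal{H}_{g(c)} = \mathcal{H}_c$, and since for $c \in \C^\circ$ both $\OO_c$ and $\OO_{g(c)}$ are semisimple with $\mathrm{KZ}$ an equivalence onto $\mathcal{H}_c$-mod, there is for each such $c$ a unique bijection $\pi_c$ of $\mathrm{Irr}(\CC W)$ with $\mathrm{KZ}_{g(c)}(\Delta_{g(c)}(E)) \cong \mathrm{KZ}_c(\Delta_c(\pi_c(E)))$. The KZ connection on $\Delta_c(E)^\circ = \CC[\hh^\circ] \otimes E$ depends polynomially on $c$, so its monodromy representation of $B_W$ varies continuously; rigidity of simple modules over the semisimple family $\mathcal{H}_c$ then makes $\pi_c$ locally constant, hence constant on the connected set $\C^\circ$, and its value at $c=0$ is $\kappa_g^{-1}$ by definition. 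This handles all of $\C_\ZZ \rtimes G_W$ at once with no case distinction and no induction on word length.

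By contrast, the step you single out --- that the spherical functor $\Phi_g$ sends standards to standards with a relabelling independent of $c$ --- is, on the full spherical locus, of the same nature as the Conjecture stated in Section~4 of the paper for the closely related Heckman-Opdam functor, and that conjecture is left open there. Restricted to $\C^\circ$, the ``sends standards to standards'' part is automatic by semisimplicity, but the ``relabelling independent of $c$'' part still needs exactly the rigidity argument above; once you have run that argument, $\Phi_g$ has contributed nothing and can be deleted from the proof. So as written your proposal either has an unfilled gap (if you insist on establishing the claim about $\Phi_g$ on the whole spherical locus) or collapses to the paper's direct argument (if you only work on $\C^\circ$).
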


\subsection{KZ twists preserve local data} \label{KZ local data}

Corollary 7.18 of \cite{BeCh} shows that for $\tau \in \C_\ZZ$ the KZ twist by $\tau$ preserves local data,
$$E_{H,j}=\kappa_\tau(E)_{H,j} \quad \hbox{for all $H,j$.}$$

Let $\sigma \in G_W$ and let $c \in \C^\circ$ be regular. Since $\Delta_c(E)^\circ \cong \Delta_{\sigma(c)}(\kappa_\sigma(E))^\circ$ there is a space of isotype $\kappa_\sigma(E)$ singular vectors for the $\sigma(c)$-Dunkl operators in $\Delta_c(E)^\circ=\CC[\hh^\circ] \otimes E$. As in the proof of Corollary 7.18 of \cite{BeCh} its homogeneous degree $m$ does not vary with $c$, implying that $$m=\sigma(c)_{\kappa_\sigma(E)}-c_E=\frac{1}{\mathrm{dim}(E)} \sum_{H,j} (n_H c_{H,\sigma^{-1}(j)}+\sigma^{-1}(j)-j) \kappa_\sigma(E)_{H,j}- n_H c_{H,j} E_{H,j}$$ is constant. This implies
$$\kappa_\sigma(E)_{H,\sigma(j)}=E_{H,j},$$ which is the sense in which $\kappa_\sigma$ preserves local data.

\subsection{A particular KZ twist we will use} \label{sigma}

In order to apply the preceding material to the proof of Theorem \ref{main} we must choose a particular element of $\C_\ZZ \rtimes G_W$. There are many choices possible that would work for us. Here we fix one. 

For $c \in \C$ put $$\sigma(c)_{H,j}=\begin{cases} c_{H,0} \quad \hbox{if $j=0$, and} \\ c_{H,n_H-j}+2(n_H-j)/n_H \quad \hbox{if $j \neq 0$.} \end{cases}$$ This is the product $\sigma=\tau \sigma_0$ of the longest element $\sigma_0$ of $G_W$ with the transformation $\tau$ defined by
$$\tau(c)_{H,j}=c_{H,j-1}+(n_H-1)/n_H.$$ Alternatively, it is the product of the longest element of the subgroup of $G_W$ fixing $(H,0)$ for all $H$ with the translation
$$c_{H,j} \mapsto \begin{cases} c_{H,0} \quad \hbox{if $j=0$, and} \\ c_{H,j}+1 \quad \hbox{if $j \neq 0$.} \end{cases} $$ Thus in case all reflections have order $2$, this $\sigma$ is simply the translation $c \mapsto c+1$. But it is more complicated in general, and the extra complication is definitely necessary for the proof of Theorem \ref{main}.

\subsection{Preservation of $c$-order} \label{order}

Consider the hyperplane of parameters $c \in \C$ satisfying the condition $$c_{\hh^*}=1.$$ There is at least one $c$ on this hyperplane such that in addition there is a positive real number $c_0$ such that we have $$c_{H,j}=2j c_0 \quad \hbox{for all $H \in \A$ and $0 \leq j \leq n_H-1$.}$$ Fix such a choice of $c$ and suppose $E$, $F$ are irreducible $W$-modules with $c_E-c_F >0$. We then have
\begin{align*} 0<c_E-c_F&=\frac{1}{\mathrm{dim}(E)} \sum n_H c_{H,j} E_{H,j}- \frac{1}{\mathrm{dim}(F)} \sum n_H c_{H,j} F_{H,j} \\ 
&=c_0\left(\frac{1}{\mathrm{dim}(E)} \sum n_H 2j  E_{H,j}- \frac{1}{\mathrm{dim}(F)} \sum n_H 2j F_{H,j} \right).\end{align*} 

Hence by using \ref{KZ local data}
\begin{align*} \sigma(c)_{\kappa_\sigma(E)}&-\sigma(c)_{\kappa_\sigma(F)}\\ 
&=\frac{1}{\mathrm{dim}(E)} \sum_{j \neq 0} (n_H c_{H,n_H-j}+2(n_H-i)) E_{n_H-i}-\frac{1}{\mathrm{dim}(F)} \sum_{j \neq 0} (n_H c_{H,n_H-j}+2(n_H-i)) F_{n_H-i} \\ 
&=c_E-c_F+\frac{1}{\mathrm{dim}(E)} \sum n_H 2j E_{H,j}-\frac{1}{\mathrm{dim}(F)} \sum n_H 2j F_{H,j} >0.\end{align*} It follows that the bijection $\kappa_\sigma$  intertwines the $c$-order on $\mathrm{Irr}(\CC W)$ with the $\sigma(c)$-order. Moreover, the same is true for any parameter $c$ on the hyperplane $c_{\hh^*}=1$ sufficiently close to such a choice.

\subsection{Quasi-invariants} A \emph{multiplicity function} is a collection $m=(m_{H,j})_{H \in \A, 0 \leq j \leq n_H-1}$ of integers indexed by pairs $H \in \A$ and $0 \leq j \leq n_H-1$ with the property that $m_{H,j}=m_{w(H),j}$ for all $w \in W$, $H \in \A$, and $0 \leq j \leq n_H-1$. Given a multiplicity function $m$ and a $\CC W$-module $E$, we define the space $Q_m(E)$ of \emph{$E$-valued quasi-invariants} as in Berest-Chalykh \cite{BeCh} (3.12) to be the space of $f \in \CC[\hh^\circ] \otimes E$ such that
$$v_H( 1 \otimes e_{H,i} \cdot f) \geq m_{H,i} \quad \hbox{for all $H \in \A$ and $0 \leq i \leq n_H-1$,}$$ where $v_H$ is the valuation on $\CC[\hh^\circ] \otimes E$ that gives the order of vanishing along $H$ (and where e.g. $v_H(f) \geq -2$ means $f$ has at most a pole of order $2$ along $H$). Given a multiplicity function $m$ and a parameter $c \in \C$, we say that $m$ and $c$ are \emph{compatible} if
$$n_H c_{H,i-m_{H,i}}=m_{H,i} \quad \hbox{for all $H \in \A$ and $0 \leq i \leq n_H-1$.}$$ This relationship may seem complicated, but we note that if $c \in \C_\ZZ$ then defining $m_{H,i}=n_H c_{H,i}$ we have $m$ compatible with $c$, and if $g \in G_W$ and $c \in \C_\ZZ$ then by defining 
$$m_{H,i}=n_H c_{H,i}+i-g(i)$$ we have $m$ compatible with $g \cdot c$. Thus each element of the orbit $\C_\ZZ \rtimes G_W(0)$ is compatible with a (unique) multiplicity function $m$. 

Just as in \cite{BeCh} Proposition 3.10, one checks
\begin{lemma}
If $m$ and $c$ are compatible then $Q_m(E)$ is a $H_c$-submodule of $\CC[\hh^\circ] \otimes E$, where $H_c$ acts on $\CC[\hh^\circ] \otimes E$ via the inclusion $H_c \subseteq D(\hh^\circ) \rtimes W$. 
\end{lemma}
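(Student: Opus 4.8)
The plan is to transcribe the proof of Proposition 3.10 of \cite{BeCh}. Since $H_c$ is generated by $\CC[\hh]$, the group $W$, and the Dunkl operators $y_c$ for $y \in \hh$, it suffices to show that each of these three families of operators carries $Q_m(E)$ into itself, and each of the defining inequalities $v_H(1 \otimes e_{H,i} \cdot f) \geq m_{H,i}$ can be tested one reflecting hyperplane at a time. Stability under $\CC[\hh]$ is immediate, since for $p \in \CC[\hh]$ multiplication by $p$ commutes with $1 \otimes e_{H,i}$ and cannot lower order of vanishing: $v_H(1 \otimes e_{H,i} \cdot pf) = v_H(p) + v_H(1 \otimes e_{H,i} \cdot f) \geq m_{H,i}$. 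Stability under $W$ follows from the identities $(1 \otimes e_{H,i}) w = w(1 \otimes e_{w^{-1}(H),i})$ and $v_H(w \cdot f) = v_{w^{-1}(H)}(f)$ in $D(\hh^\circ) \rtimes W$, together with the $W$-invariance $m_{H,i} = m_{w^{-1}(H),i}$ of the multiplicity function; the same identities show more generally that each idempotent $e_{H,j}$ (acting diagonally, as inside a Dunkl operator) already preserves $Q_m(E)$.

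The substantive point is invariance under a Dunkl operator $y_c = \partial_y - \sum_{H \in \A} \frac{\alpha_H(y)}{\alpha_H} \sum_j n_H c_{H,j} e_{H,j}$. Fix a hyperplane $H_0$ and a nonzero $f \in Q_m(E)$, and work near a generic point of $H_0$, where $t := \alpha_{H_0}$ is a local uniformizer and only $W_{H_0}$ is relevant. By the previous remark the factors $\sum_j n_H c_{H,j} e_{H,j}$ all preserve the relevant bound, and $\alpha_H(y)/\alpha_H$ is regular along $H_0$ for $H \neq H_0$; hence the only two sources of a loss of order along $H_0$ are the derivative $\partial_y$ and the first-order pole of $\alpha_{H_0}(y)/\alpha_{H_0}$ in the summand for $H = H_0$. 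Put $n = n_{H_0}$ and $g = 1 \otimes e_{H_0,i} \cdot f$, and decompose $g = \sum_{j=0}^{n-1} g^{(j)}$ where $g^{(j)}$ is the part of $g$ whose order along $H_0$ lies in the residue class $\equiv i - j \pmod n$; one checks $g^{(j)} = 1 \otimes e_{H_0,i} \cdot (e_{H_0,j} f)$, so the two potentially bad contributions to $1 \otimes e_{H_0,i} \cdot y_c(f)$ combine, along $H_0$, into $\alpha_{H_0}(y) \sum_j \big( \partial_t - \tfrac{n\, c_{H_0,j}}{t} \big) g^{(j)}$, the remaining terms all manifestly having order $\geq m_{H_0,i}$.

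Now $\partial_t - \tfrac{n c_{H_0,j}}{t}$ sends $t^a(\text{unit}) \mapsto (a - n c_{H_0,j}) t^{a-1}(\text{unit}) + (\text{higher order})$, lowering order by exactly one unless $a = n c_{H_0,j}$, when the leading term is annihilated. Since $v_{H_0}(g) \geq m_{H_0,i}$ and $g^{(j)}$ is supported in a single residue class mod $n$, the only index $j$ for which $g^{(j)}$ can have order exactly $m_{H_0,i}$ is $j_0 = (i - m_{H_0,i}) \bmod n$; for $j \neq j_0$ the component $g^{(j)}$ already has order $\geq m_{H_0,i} + 1$ and so survives the loss of one order, while for $j = j_0$ the compatibility relation $n_{H_0}\, c_{H_0,\, i - m_{H_0,i}} = m_{H_0,i}$ says exactly that $a = m_{H_0,i}$ is the resonant value, so the dangerous leading term vanishes and order $\geq m_{H_0,i}$ is retained. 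Thus $v_{H_0}(1 \otimes e_{H_0,i} \cdot y_c(f)) \geq m_{H_0,i}$ for every $H_0$ and $i$, i.e. $y_c(f) \in Q_m(E)$. The only genuine work is the bookkeeping just indicated: keeping straight the interaction of the $W_{H_0}$-isotypic index $i$, the residue class mod $n_{H_0}$ of orders of vanishing along $H_0$, and the shift by $-1$ that both $\partial_t$ and multiplication by $t^{-1}$ induce, and confirming that the unique index $j_0$ at which compatibility enters is precisely the resonant one. All of this is done in \cite{BeCh} for their parameters, and nothing more than careful transcription is needed here.
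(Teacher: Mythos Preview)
Your proposal is correct and follows precisely the approach the paper indicates: the paper omits the proof entirely, saying only ``Just as in \cite{BeCh} Proposition 3.10, one checks,'' and you have supplied exactly that transcription, with the compatibility condition $n_H c_{H,i-m_{H,i}}=m_{H,i}$ entering at the expected resonance step. The bookkeeping with the isotypic index $i$, the residue class of $v_{H_0}$ modulo $n_{H_0}$, and the identification of the unique critical index $j_0$ is handled correctly.
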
 In fact, when $m$ is the unique multiplicity function compatible with a parameter $g(0)$ with $g \in \C_\ZZ \rtimes G_W$, the module $Q_m(E)$ of $E$-valued quasi-invariants is an irreducible object of category $\OO_{g(0)}$ with localization $Q_m(E)[\delta^{-1}]$ equal to $\CC[\hh^\circ] \otimes E$ as $D(\hh^\circ) \rtimes W$-modules. It follows that $$\mathrm{KZ}_{g(0)}(Q_m(E)) \cong E \quad \implies \quad Q_m(E) \cong \Delta_{g(0)}(\kappa_g(E)).$$

We define the space of \emph{quasi-invariants} $Q_m \subseteq \CC[\hh^\circ]$ by $f \in Q_m$ if and only if
$$v_H(e_{H,-i} f)\geq m_{H,i} \quad \hbox{for all $H \in \A$ and $0 \leq i \leq n_H-1$.}$$ Note that $Q_m$ is different from $Q_m(\mathrm{triv})$. As in Theorem 3.4 from \cite{BeCh}, the relationship is rather:
\begin{theorem}
We have
$$e(Q_m \otimes 1)=eQ_m(\CC W)$$ as subsets of $\CC[\hh^\circ] \otimes \CC W$, and hence $Q_m$ is a $e H_c e$-module if $c$ and $m$ are compatible. Moreover, $e H_c e$ is equal to the algebra $D(Q_m)^W e$ of $W$-invariant differential operators on $Q_m$ (both regarded as subalgebras of $e (D(\hh^\circ) \rtimes W)e$).
\end{theorem}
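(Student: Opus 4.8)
The plan is to adapt the proof of Theorem 3.4 of \cite{BeCh} essentially verbatim; the only genuinely new features are that we allow $c_{H,0}\neq 0$ and that the idempotents $e_{H,j}$ must be carried with both signs of $j$ (which is exactly why $Q_m$ differs from $Q_m(\mathrm{triv})$). The first step is the identity $e(Q_m\otimes 1)=eQ_m(\CC W)$. Both sides are $\CC[\hh]^W$-submodules of the diagonally $W$-equivariant module $\CC[\hh^\circ]\otimes\CC W$, and the conditions cutting out $Q_m$ and $Q_m(\CC W)$ are imposed independently at each $W$-orbit of reflecting hyperplanes, so --- as in \cite{BeCh} --- the equality reduces to a computation after completing along a single hyperplane $H$, where $W_H$ is cyclic of order $n_H$ and acts on $\alpha_H$ through $\mathrm{det}^{-1}$. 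There one expands $e(g\otimes 1)=\frac{1}{|W|}\sum_{w\in W}w(g)\otimes w$, applies $1\otimes e_{H,j}$ (acting by left multiplication on the $\CC W$-factor), and, after reindexing the $W_H$-sum by $x\mapsto x^{-1}$, finds that the quasi-invariance requirement $v_H(1\otimes e_{H,j}\cdot e(g\otimes 1))\geq m_{H,j}$ is exactly the condition $v_H(e_{H,-j}\,g)\geq m_{H,j}$ defining membership of $g$ in $Q_m$; the switch from $e_{H,j}$ to $e_{H,-j}$ here is the passage to the contragredient. For the converse, every element of $Q_m(\CC W)$ fixed by the diagonal idempotent $e$ is determined by its component in a fixed slot of $\CC W$, and the same computation places that component in $Q_m$; combined with the fact that $e(g\otimes 1)$ is itself $e$-fixed, this gives equality of the two subsets. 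Since the lemma above makes $Q_m(\CC W)$ an $H_c$-submodule of $\CC[\hh^\circ]\otimes\CC W$ when $c$ and $m$ are compatible, $eQ_m(\CC W)$ is an $eH_ce$-module, and transporting along the injective map $g\mapsto e(g\otimes 1)$ identifies it with $Q_m$, on which $eH_ce$ then acts through its realization as $W$-invariant differential operators on $\hh^\circ$ (checking that these two actions agree is a routine radial-part computation).

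For the equality $eH_ce=D(Q_m)^We$ of subalgebras of $e(D(\hh^\circ)\rtimes W)e\cong D(\hh^\circ)^W$, the inclusion $eH_ce\subseteq D(Q_m)^We$ is the content of the preceding paragraph. For the reverse inclusion, inverting $\delta=\prod_{r\in R}\alpha_r$ makes it trivial --- $Q_m[\delta^{-1}]=\CC[\hh^\circ]$ gives $D(Q_m)[\delta^{-1}]=D(\hh^\circ)$, while $(eH_ce)[\delta^{-1}]=D(\hh^\circ)^W=(D(Q_m)^We)[\delta^{-1}]$ --- so the genuine content is a pole-order statement: a $W$-invariant differential operator preserving $Q_m$ has, along each hyperplane, only the poles permitted inside $H_c$, so no denominators are needed. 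This is the generalization to the present setting of the Berest-Chalykh identification of the Cherednik algebra with differential operators on quasi-invariants, and the argument is theirs: $Q_m(\CC W)$ is an $(H_c,D(Q_m))$-bimodule implementing a Morita equivalence under which $eH_ce$ corresponds to $D(Q_m)^W$, and the correspondence is an equality inside $D(\hh^\circ)^W$ because both algebras act identically on the generic localized module $\CC[\hh^\circ]$.

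The main obstacle is precisely this reverse inclusion $D(Q_m)^We\subseteq eH_ce$: it rests on the characterization of $H_c$ as the subalgebra of $D(\hh^\circ)\rtimes W$ preserving $Q_m(\CC W)$ (via the PBW filtration and a leading-term estimate near each hyperplane), and on having enough control of $Q_m(\CC W)$ as an $H_c$-module --- it is semisimple and is generated over $H_c$ by its $e$-isotypic part --- to promote the $e$-level statement to the full module. Both ingredients are already in \cite{BeCh} and survive unchanged here; what genuinely has to be redone is only the one-hyperplane bookkeeping of the first step, carried out while keeping track of the shift $c_{H,0}$ and of both signs of $j$.
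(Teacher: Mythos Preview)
Your proposal is correct and takes essentially the same approach as the paper: the paper does not give a separate proof but simply states the result ``as in Theorem 3.4 from \cite{BeCh},'' and your sketch is precisely an adaptation of that Berest--Chalykh argument with the bookkeeping (the sign switch $e_{H,j}\leftrightarrow e_{H,-j}$ and the nonzero $c_{H,0}$) made explicit. Your level of detail exceeds what the paper provides, but the route is the same.
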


Finally, by noting that $Q_m=Q_k$ if $m$ and $k$ are multiplicity functions satisfying 

\begin{equation} \label{mult equiv} \lceil (m_{H,i}-i)/n_H \rceil=\lceil (k_{H,i}-i)/n_H \rceil \quad \hbox{for all $H \in \A$ and $0 \leq i \leq n_H-1$}\end{equation} one checks that for $c \in \C_\ZZ$ and $g \in G_W$, the multiplicity functions compatible with $c$ and with $g(c)$ produce the same space of quasi-invariants, implying $e H_c e=e H_{g(c)} e$. Now applying a density argument as in Proposition 5.4 from \cite{BeCh} gives the version that we will use (as mentioned in the introduction, this is Corollary 2.22 from \cite{BLNS}, who give a completely different proof):
\begin{theorem} \label{spherical invariance}
For $g \in G_W$ and $c \in \C$ we have
$$eH_c e=e H_{g(c)} e \quad \text{and} \quad f(y_c) e=f(y_{g(c)})e \quad \hbox{for all $f \in \CC[\hh^*]^W$.}$$
\end{theorem}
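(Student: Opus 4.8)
The plan is to prove both identities first for integral parameters, where they fall out of the quasi-invariant description of $eH_ce$ established above, then to spread them to all of $\C$ by a density argument modeled on Proposition 5.4 of \cite{BeCh}, and finally to deduce the statement about $f(y_c)e$ from the equality $eH_ce=eH_{g(c)}e$ by a short computation with principal symbols. For the integral case one takes $c\in\C_\ZZ$: then $m$ with $m_{H,i}=n_Hc_{H,i}$ is compatible with $c$ and $k$ with $k_{H,i}=n_Hc_{H,i}+i-g(i)$ is compatible with $g(c)$, and since $0\le i,g(i)\le n_H-1$ both give $\lceil(m_{H,i}-i)/n_H\rceil=\lceil(k_{H,i}-i)/n_H\rceil=c_{H,i}$, so $Q_m=Q_k$ by $(\ref{mult equiv})$; as $eH_ce=D(Q_m)^We$ whenever $c$ and $m$ are compatible, this yields $eH_ce=D(Q_m)^We=D(Q_k)^We=eH_{g(c)}e$.

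To pass to arbitrary $c$, I would fix $g\in G_W$ and regard $c\mapsto eH_ce$ and $c\mapsto eH_{g(c)}e$ as two families of subalgebras of $D(\hh^\circ)\rtimes W$, each filtered by the degree filtration placing $\hh^*$ and $\hh$ in degree one and $W$ in degree zero. By the triangular decomposition of $H_c$ both have associated graded $e(\CC[\hh\oplus\hh^*]\rtimes W)e\cong\CC[\hh\oplus\hh^*]^W$, which does not involve $c$; in particular $(eH_ce)^{\le d}$ and $(eH_{g(c)}e)^{\le d}$ have the same $c$-independent dimension for each $d$. Lifting a fixed finite homogeneous generating set of $\CC[\hh\oplus\hh^*]^W$ to $eH_ce$ by replacing $\hh$-variables by Dunkl operators produces generators of $eH_ce$ that depend polynomially on $c$ (the Dunkl operators being affine in $c$), so $(eH_ce)^{\le d}$ is the span of a fixed finite set of products of these, each polynomial in $c$; with the dimension constant this exhibits $c\mapsto(eH_ce)^{\le d}$ as a morphism from $\C$ into a Grassmannian of a fixed finite-dimensional space $V_d\subseteq D(\hh^\circ)\rtimes W$ (operators whose order, pole order along $\delta$, and coefficient degree are all bounded in terms of $d$), and likewise for $g(c)$. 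These two morphisms agree on the Zariski-dense lattice $\C_\ZZ$ by the previous paragraph, hence everywhere; letting $d\to\infty$ gives $eH_ce=eH_{g(c)}e$ for all $c\in\C$.

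For the second identity I would take $f\in\CC[\hh^*]^W$ homogeneous of degree $d$. Since $f$ is $W$-invariant, $f(y_c)$ commutes with $W$, so $f(y_c)e=ef(y_c)e\in eH_ce$ and, by the first part, $f(y_{g(c)})e\in eH_{g(c)}e=eH_ce$. With $\mathbf{eu}=\sum_ix_i\partial_{x_i}$ the Euler vector field we have $[\mathbf{eu},x]=x$ for $x\in\hh^*$, $[\mathbf{eu},w]=0$ for $w\in W$, and $[\mathbf{eu},y_c]=-y_c=[\mathbf{eu},y_{g(c)}]$, so $z:=f(y_c)e-f(y_{g(c)})e$ is an element of $eH_ce$ of $\mathrm{ad}(\mathbf{eu})$-weight $-d$. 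By the triangular decomposition the weight $-d$ part of $H_c$ is spanned by the monomials $x^awy_c^b$ with $|b|=|a|+d$, each of differential-operator order $|b|\ge d$; since their order-$|b|$ leading symbols $x^aw\,\xi^b$ are linearly independent, every nonzero element of this part has order at least $d$, with order exactly $d$ only if its order-$d$ symbol is nonzero. But the symbol of each Dunkl operator $y$ is $\partial_y$, independent of the parameter, so $f(y_c)e$ and $f(y_{g(c)})e$ both have order $d$ with the same order-$d$ symbol, namely that of $f(\partial)e$; hence $z$ has order $\le d$ and vanishing order-$d$ symbol, which forces $z=0$, i.e.\ $f(y_c)e=f(y_{g(c)})e$.

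The main obstacle I anticipate is the bookkeeping hidden in the density step: one has to produce, uniformly in $c$, the bounds on order, pole order along $\delta$, and coefficient degree needed to fit all the subspaces $(eH_ce)^{\le d}$ into one finite-dimensional $V_d$, and to check that (together with the constancy of dimension coming from the triangular decomposition) this genuinely makes $c\mapsto(eH_ce)^{\le d}$ an algebraic morphism, so that agreement on the dense lattice $\C_\ZZ$ forces agreement everywhere. This is precisely the estimate carried out in Proposition 5.4 of \cite{BeCh} for a cyclic subgroup of $G_W$; the only new point is to verify that nothing in it uses more than the fact that $g\in G_W$.
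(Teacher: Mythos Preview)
Your proposal is correct and follows essentially the same route as the paper: establish $eH_ce=eH_{g(c)}e$ for $c\in\C_\ZZ$ via the quasi-invariant description (checking that the multiplicity functions compatible with $c$ and with $g(c)$ satisfy \eqref{mult equiv}, exactly as in the paragraph preceding the theorem), and then extend to all $c$ by the density argument of \cite{BeCh}, Proposition~5.4. Your verification that $\lceil(m_{H,i}-i)/n_H\rceil=\lceil(k_{H,i}-i)/n_H\rceil=c_{H,i}$ is the concrete content of ``one checks'' in the paper.

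The one place you go beyond the paper is the second equality $f(y_c)e=f(y_{g(c)})e$, for which the paper gives no separate argument. Your approach---observe that both sides lie in $eH_ce$ by the first part, that their difference has Euler weight $-d$, and that any nonzero element of $H_c$ of Euler weight $-d$ has differential-operator order at least $d$ with nonvanishing order-$d$ symbol, whereas the difference has order strictly less than $d$ because the two sides share the symbol $f(\partial)e$---is a clean and self-contained deduction from the first equality. This is arguably tidier than trying to verify the second equality directly at integral points and then invoking density again, since at integral points there is no obvious quasi-invariant reason for $f(y_c)e$ and $f(y_{g(c)})e$ to coincide as specific elements of $D(Q_m)^We$; your symbol-and-weight argument bypasses that entirely. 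The bookkeeping you flag in the last paragraph (uniform bounds on order, pole order, and coefficient degree to land in a fixed finite-dimensional $V_d$) is exactly what \cite{BeCh} carries out, and indeed uses nothing about $g$ beyond membership in $G_W$.
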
 This last inequality is to be interpreted as follows: for a symmetric polynomial $f$, evaluating $f$ on the Dunkl operators $y_c$ and then multiplying by $e$ gives the same result as evaluating $f$ on the Dunkl operators $y_{g(c)}$ and then multiplying by $e$.

In fact, for multiplicity functions $m$ and $k$ satisfying \eqref{mult equiv} we have
\begin{equation} \label{Q inv} e Q_m(E)=e Q_k(E) \quad \hbox{for all $E \in \CC W$-mod.}\end{equation} Just as in \cite{BeCh}, this produces a host of consequences for the numerology of the fake degrees of complex reflection groups, some of which are intimately related to the numerology of diagonal coinvariants we are exploring here. We record the most general version of this now.

\subsection{Symmetries of the exponents} Here we record the version of the symmetries of the exponents (referred to as symmetries of the fake degrees in \cite{BeCh} and \cite{Opd}) we will need. 
\begin{theorem} \label{exponents symmetry}
Let $\kappa=\kappa_\sigma$ be the KZ twist associated with the element $\sigma \in \C_\ZZ \rtimes G_W$ defined above and as above write $e_1(E),e_2(E),\dots$ for the exponents of an irreducible $W$-module $E$. Then
$$e_i(\hh)+e_{n-i+1}(\kappa(\hh^*))=g \quad \hbox{for all $1 \leq i \leq n$.}$$
\end{theorem}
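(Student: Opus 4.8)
We follow the strategy of subsection~8.3 of \cite{BeCh}: read the exponents of $\hh$ and of $\kappa(\hh^*)$ off the Poincaré series of spherical quasi-invariant modules, and use the equality $eQ_m(E)=eQ_k(E)$ of \eqref{Q inv} to compare the two. The organizing principle is that for a reflection representation $F$ of $W$ the generating polynomial of its exponents is recovered from a single spherical standard module: since $F$ and $F^*$ are both amenable, Theorem~10.18 of \cite{LeTa} applied to $F$ shows $e\Delta_0(F^*)=(\CC[\hh]\otimes F^*)^W$ is free over $\CC[\hh]^W$ with homogeneous generators in degrees $e_1(F),\dots,e_n(F)$, so that
\[\sum_{i=1}^n q^{e_i(F)}=\prod_{i=1}^n(1-q^{d_i})\cdot\mathrm{Hilb}\bigl((\CC[\hh]\otimes F^*)^W,\,q\bigr).\]
Since the numerator for $F=\hh$ is the classical $\sum_i q^{d_i-1}$, the task is to identify the analogous polynomial for $F=\kappa(\hh^*)$.

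Concretely: First, realize $\kappa(\hh^*)=\kappa_\sigma(\hh^*)$ via quasi-invariants. Writing $\sigma(0)=\sigma_0\cdot\lambda$ with $\sigma_0\in G_W$ the order-reversing element and $\lambda\in\C_\ZZ$ the integral translation of subsection~\ref{sigma}, the recipe for compatible multiplicity functions produces the $m$ compatible with $\sigma(0)$, explicitly $m_{H,0}=0$ and $m_{H,i}=2i$ for $i\neq 0$; since $\mathrm{KZ}_{\sigma(0)}(Q_m(\hh^*))\cong\hh^*$, one has $Q_m(\hh^*)\cong\Delta_{\sigma(0)}(\kappa_\sigma(\hh^*))$ in $\OO_{\sigma(0)}$. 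Second, pass to spherical parts. This isomorphism respects the Euler grading and the $W$-action, so it identifies $eQ_m(\hh^*)$, as a graded $\CC[\hh]^W$-module, with $(\CC[\hh]\otimes\kappa_\sigma(\hh^*))^W$ up to one overall degree shift; by the local-data identity $\kappa_\sigma(F)_{H,\sigma_0(j)}=F_{H,j}$ of subsection~\ref{KZ local data}, $\kappa_\sigma(\hh^*)$ is an irreducible reflection representation, so the first paragraph applies (to it and to its dual, whose interaction with $\kappa_\sigma$ is explicit), and the numerator of $eQ_m(\hh^*)$ is, up to a power of $q$, the exponent polynomial of $\kappa_\sigma(\hh^*)$. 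Third, transport along \eqref{Q inv}. One computes $\lceil(m_{H,i}-i)/n_H\rceil=[i\neq 0]$, so $m$ is equivalent, in the sense of \eqref{mult equiv}, to the multiplicity function compatible with the translation $\lambda$ alone; hence $eQ_m(\hh^*)=eQ_{m'}(\hh^*)$ with $m'$ compatible with $\lambda$, and the latter is identified --- again up to an explicit degree shift, now coming only from an integral translation --- with $(\CC[\hh]\otimes\kappa_\lambda(\hh^*))^W$, where $\kappa_\lambda(\hh^*)$ is $\hh^*$ twisted by a linear character, so that its numerator is a controlled modification of $\sum_i q^{d_i-1}$. Fourth, equate the two descriptions of $\prod_i(1-q^{d_i})\cdot\mathrm{Hilb}(eQ_m(\hh^*),q)$: the order-reversing part $\sigma_0$ turns one into the $q\mapsto q^{-1}$ reversal of the other while the translations contribute honest degree shifts, and after matching these two effects the relation collapses to $\sum_i q^{e_i(\kappa_\sigma(\hh^*))}=q^{g}\sum_i q^{-e_i(\hh)}$ for a normalizing exponent forced, on comparing the extreme terms, to equal $2N/n$ --- which is exactly $e_i(\hh)+e_{n-i+1}(\kappa(\hh^*))=g$ and exhibits $g=2N/n$ as an integer.

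The main obstacle is the bookkeeping in the last two steps: one must compute exactly by how much the polynomial grading is shifted in passing between the quasi-invariant lattices $Q_m(\hh^*),Q_{m'}(\hh^*)$ and the standard modules they are isomorphic to, keeping the contribution of the order-reversing $\sigma_0$ separate from that of the integral translations, and then verify that these combine into exactly $g=2N/n$ rather than into some other rational combination of $N$, $N^*$ and the orders $n_H$. This is where the ``striking coincidences from the numerology of complex reflection groups'' advertised in the introduction enter --- at bottom the identity $\sum_{H\in\A}(n_H-1)=N$ together with the compatibility of the shift attached to $\sigma$ with the codegrees; the representation-theoretic input (irreducibility and reflection-ness of $\kappa_\sigma(\hh^*)$ from subsection~\ref{KZ local data}, freeness of the relevant invariant modules, and the equalities of Theorem~\ref{spherical invariance} and of \eqref{Q inv}) is already available above. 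A secondary point worth spelling out is that the Euler-graded isomorphism of the first step descends to a graded --- and not merely filtered --- isomorphism on spherical parts, which follows from the irreducibility of $\hh^*$ and of $\kappa_\sigma(\hh^*)$.
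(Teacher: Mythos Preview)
Your overall strategy---compute the Hilbert series of a single spherical quasi-invariant module $eQ_m(\hh^*)$ in two ways via \eqref{Q inv}---is the paper's strategy, but your execution of steps three and four contains a genuine gap.

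The problem is the claim that $\kappa_\lambda(\hh^*)$ is ``$\hh^*$ twisted by a linear character.'' This is unjustified. Since integral KZ twists preserve local data (subsection~\ref{KZ local data}), any linear character $\chi$ with $\kappa_\lambda(\hh^*)\cong\hh^*\otimes\chi$ would have to restrict trivially to every $W_H$, hence be trivial; so your claim is really that $\kappa_\lambda$ fixes $\hh^*$. For real $W$ one has $\sigma=\lambda$, so this is literally the assertion $\kappa_\sigma(\hh^*)=\hh^*$---circular. In general it is a nontrivial statement about a specific Opdam KZ twist that you have not established. Even granting it, equating the two Hilbert series of the \emph{single} module $eQ_m(\hh^*)=eQ_{m'}(\hh^*)$ yields
\[
e_i\bigl(\kappa_\sigma(\hh^*)^*\bigr)+\sigma(0)_{\kappa_\sigma(\hh^*)}\;=\;e_i\bigl(\kappa_\lambda(\hh^*)^*\bigr)+\lambda_{\kappa_\lambda(\hh^*)}
\]
with the \emph{same} index $i$ on both sides: there is no $q\mapsto q^{-1}$ reversal. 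The element $\sigma_0$ acts on the parameter space, not on Hilbert series; nothing in the comparison of two multiplicity functions equivalent under \eqref{mult equiv} produces a palindromic relation. So the passage from your step three to ``$\sum_i q^{e_i(\kappa_\sigma(\hh^*))}=q^{g}\sum_i q^{-e_i(\hh)}$'' is not supported.

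What the paper does differently is to choose the \emph{reference} multiplicity function to be constant, $m_{H,i}=-(n_H-1)$, so that $Q_m(E)=\delta^{-1}(\CC[\hh]\otimes E)$ is explicit and its spherical Hilbert series is $t^{-N}\sum_i t^{e_i(\det\otimes E^*)}\prod_i(1-t^{d_i})^{-1}$. The index reversal then comes not from $\sigma_0$ but from the Poincar\'e-type duality on the coinvariant ring, via the isomorphism $f\mapsto f(\partial)\cdot\delta$, which gives $e_i(\det\otimes E^*)=N-e_{\dim E-i+1}(E)$. Comparing with the $\kappa_\tau$-side and specializing $E=\kappa_\sigma(\hh^*)$ yields the theorem. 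Those two ingredients---a constant multiplicity function on one side, and the coinvariant-ring duality supplying the reversal---are exactly what your argument is missing; your $m'$ (with $m'_{H,0}=0$, $m'_{H,i}=n_H$) is not equivalent under \eqref{mult equiv} to any constant function, so it cannot play that role.
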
 We note that for real reflection groups, we always have $g=h=d_n$ where $d_n$ is the largest degree and $\kappa(\hh^*)=\hh^*$, so that this reduces to the classical symmetry $d_i+d_{n-i+1}=d_n+2$.

\begin{proof} We now prove Theorem \ref{exponents symmetry}. For each $W$-orbit $S \subseteq \A$ of hyperplanes we put
$$\delta_S=\prod_{H \in S} \alpha_H$$ and we fix an integer $m_S$. Defining the multiplicity function $m$ by $m_{H,i}=m_S$ for all $H \in S$ we have
$$Q_m(E)=\prod_{S \in \A /W} \delta_S^{m_S} \left( \CC[\hh] \otimes E \right)$$ where the product runs over all $W$-orbits $S$ of hyperplanes in $\A$. Let $c \in \C$ be compatible with $m$, let $\sigma_0 \in G_W$, and let $k$ be the multiplicity function compatible with $\sigma_0 \cdot c$. Let $\tau \in \C_\ZZ \rtimes G_W$ be determined by $\tau(0)=\sigma_0(c)$. We note that
$$c_{H,i}=m_H/n_H \quad \hbox{for all $H \in \A$ and $0 \leq i \leq n_H-1$.}$$

Then 
$$eQ_m(E)=eQ_k(E).$$ We compute the graded character of the space in two ways by means of this equality: with $M=\mathrm{deg}(\prod \delta_S^{m_S})$, the graded character of $e Q_m(E)$ is
$$\mathrm{ch}(eQ_m(E))=\mathrm{ch}(\prod_{S \in \A /W} \delta_S^{m_S} \left( \CC[\hh] \otimes E \right))^W=t^M \sum_{i=1}^{\mathrm{dim}(E)} t^{e_i} \prod_{i=1}^n \frac{1}{1-t^{d_i}} $$ where $e_1 \leq e_2 \leq \cdots$ are the exponents of the representation $(\chi \otimes E)^*$, with $\chi$ the linear character of $W$ afforded by $\prod_{S \in \A /W} \delta_S^{m_S}$.

On the other hand, we have $e Q_k(E) \cong (\Delta_{\tau(0)}(\kappa_\tau(E)))^W$. This has graded character
$$t^{\tau(0)_{\kappa_\tau(E)}} \mathrm{ch}((\CC[\hh] \otimes \kappa_\tau(E))^W)=t^{\tau(0)_{\kappa_\tau(E)}} \sum_{i=1}^{\mathrm{dim}(E)} t^{e_i'}  \prod_{i=1}^n \frac{1}{1-t^{d_i}}$$ where $e_1' \leq e_2' \leq \cdots$ are the exponents of $\kappa_\tau(E)^*$. We conclude
\begin{equation} \label{exp sym} \tau(0)_{\kappa_\tau(E)}+e_i'=M+e_i \quad \hbox{for $1 \leq i \leq \mathrm{dim}(E)$.}
\end{equation} The special case in which $m_S=-(n_H-1)$ for $H \in S$ is especially interesting: here $-M=N$ is the number of reflections in $W$, $\chi$ is the inverse determinant representation of $W$, and the exponents of $\chi \otimes E$ may be related to those of $E^*$ as follows: there is a $\CC$-linear isomorphism $\CC[\hh^*]_W$ onto $\CC[\hh]_W$ given by
$$f \mapsto f(\partial) \cdot \delta.$$ Moreover, there is a $W$-equivariant non-degenerate pairing of $\CC[\hh^*]_W$ with $\CC[\hh]_W$ given by 
$$(f,g)=f(\partial)(g)(0)$$ which implies that the occurrences of $F$ in $\CC[\hh]_W$ are in the same degrees as the occurrences of $F^*$ in $\CC[\hh^*]_W$. Putting this together implies that $E^* \otimes \mathrm{det}$ occurs in $\CC[\hh]_W$ in a degree $d$ each time $E^*$ occurs in $\CC[\hh^*]_W$ in degree $N-d$, which happens when $E$ occurs in $\CC[\hh]_W$ in degree $N-d$. So the occurrences of $E$ in degree $N-d$ are in bijection with the occurrences of $E^* \otimes \mathrm{det}$ in degree $d$, and the exponents $e_i$ above are given by
$$e_i=N-e_{\mathrm{dim}(E)-i+1}(E).$$ Thus \eqref{exp sym} becomes 
\begin{equation}
e_i(\kappa_\tau(E)^*)+e_{\mathrm{dim}(E)-i+1}(E)=-\tau(0)_{\kappa_\tau^{-1}(E)} \quad \hbox{for $1 \leq i \leq \mathrm{dim}(E)$.}
\end{equation} We take $E=\kappa_\tau^{-1}(\hh^*)$ here to obtain 
\begin{equation}
e_i(\hh)+e_{n-i+1}(\kappa_\tau^{-1}(\hh^*))=-\tau(0)_{\hh^*}.
\end{equation} Finally, observing that taking the longest element in $G_W$ produces $\tau=\sigma^{-1}$, a calculation then shows that the right-hand side is $g$. 
\end{proof}

\subsection{Rouquier's theorem and the BMR freeness conjecture} In the proof of Theorem \ref{main} we will apply Theorem 4.49 of \cite{Rou} to produce an equivalence $\OO_c \to \OO_{\sigma(c)}$. To be able to apply this theorem to $\OO_c$ regarded as a highest weight cover of $\mathcal{H}_c$-mod, we must use the fact that $\mathcal{H}_c$ is of dimension $|W|$, which appeared as a hypothesis in \cite{GoGr}, but which is now known in general (see \cite{Eti2} for an overview).

\subsection{Proof of Theorem \ref{main}} Finally we complete the proof of Theorem \ref{main}. By Lemma \ref{CT lemma} it suffices to construct a coinvariant type representation $L$ of dimension $(g+1)^n$. We choose a parameter $c$ as in \ref{order} and let $\sigma$ be as defined in \ref{sigma}; Proposition 4.1 of \cite{EtSt} shows that with this choice of $c$ we have a map $\Delta_c(\hh^*) \to \Delta_c(\mathrm{triv})$ with cokernel $L_c(\mathrm{triv})$ of dimension $1$. Using \ref{order} together with Rouquier's Theorem 4.49 from \cite{Rou} as in the proof of Theorem 2.7 from \cite{GoGr} shows that there is an equivalence of highest weight categories $\OO_c \to \OO_{\sigma(c)}$ sending $\Delta_c(E)$ to $\Delta_{\sigma(c)}(\kappa_\sigma(E))$ for all $E \in \mathrm{Irr}(\CC W)$ (here we note first that we may choose $c$ as above so that in addition the rank one Hecke subalgebras are semi-simple, we note second that there are regular parameters $c'$ arbitrarily close to $c$, so that we may apply Lemma \ref{KZ twist lemma} to see that $\kappa_\sigma$ is the correct bijection). By using \ref{KZ local data}, it follows that this particular choice of $\sigma$ has $\kappa_\sigma(\mathrm{triv})=\mathrm{triv}$. We obtain a short exact sequence
$$ \Delta_{\sigma(c)}(\kappa_\sigma(\hh^*)) \to \Delta_{\sigma(c)}(\mathrm{triv}) \to L_{\sigma(c)}(\mathrm{triv}) \to 0.$$
By \cite{GGOR} Corollary 4.14 the module $L_{\sigma(c)}(\mathrm{triv})$ is finite dimensional (this Corollary implies that finite-dimensionality is invariant by highest-weight equivalences between different categories $\OO$).

Now a calculation using \ref{KZ local data} gives $\sigma(c)_{\kappa_\sigma(\hh^*)}=g+1$, so that the image of $\kappa_\sigma(\hh^*)$ in $\CC[\hh]=\Delta_{\sigma(c)}(\mathrm{triv})$ is a homogeneous sequence of parameters in degree $g+1$. The symmetry of the exponents from Theorem \ref{exponents symmetry} together with \ref{det mult one} implies that the determinant appears exactly once in $L_{\sigma(c)}(\mathrm{triv})$, which is of dimension $(g+1)^n$ as required. This proves Theorem \ref{main}.

\section{Two more constructions: Heckman-Opdam shift functors and representation-valued Jack polynomials}

\subsection{Outline} In this section we give two more constructions of the coinvariant-type representation $L$ of dimension $(g+1)^n$. First, as in \cite{Gor} for the case of a real group $W$ and \cite{Val} for the groups $W=G(\ell,m,n)$, we construct it as a tensor product
$$L=H_{\sigma(c)} f \otimes_{e H_c e} \CC,$$ where $\CC$ is a one-dimensional representation of $eH_ce$ and $f$ is the determinant idempotent of $W$. Then we give a construction similar to that of \cite{Gri}, based on the techniques from \cite{Gri}, \cite{Gri2}, \cite{Gri3}, and \cite{FGM}.

\subsection{Heckman-Opdam shift functors} 

\begin{theorem} \label{HO equality}
Let $c \in \C$ and recall that $\delta=\prod_{r \in R} \alpha_r$. As (non-unital) subalgebras of $D(\hh^\circ) \rtimes W$ we have
$$\delta e H_c e \delta^{-1}=f H_{\sigma(c)} f$$ where as in \ref{sigma} the shifted parameter $\sigma(c)$ is defined by $\sigma(c)_{H,0}=0$ and $$\sigma(c)_{H,i}=c_{H,n_H-i}+2(n_H-i)/n_H \quad \hbox{ for $i \neq 0$.}$$
\end{theorem}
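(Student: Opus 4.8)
The plan is to deduce the asserted equality of subalgebras of $D(\hh^\circ) \rtimes W$ from the quasi-invariant description of the spherical subalgebra already established, namely that $e H_c e = D(Q_m)^W e$ whenever $c$ and $m$ are compatible (Theorem \ref{spherical invariance} and the theorem preceding it), by conjugating everything by $\delta = \prod_{r \in R} \alpha_r$. First I would fix a multiplicity function $m$ compatible with $c$ and a multiplicity function $k$ compatible with $\sigma(c)$, computing $k$ explicitly from the formula $\sigma(c)_{H,0}=0$, $\sigma(c)_{H,i}=c_{H,n_H-i}+2(n_H-i)/n_H$ for $i \neq 0$ together with the compatibility relation $n_H \sigma(c)_{H,i-k_{H,i}} = k_{H,i}$; the point is that $k$ differs from $m$ by the reflection $i \mapsto n_H - i$ of indices, shifted by the $(n_H-1)$ coming from the two extra $(n_H-i)/n_H$ terms. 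The key algebraic identity to verify is that conjugation by $\delta$ carries $Q_m$ (with its valuation conditions indexed via $e_{H,-i}$) onto a space isomorphic, as a module over $D(\hh^\circ)$, to the analogous space for $k$ but with the $W$-action twisted by $\det$; concretely $\delta Q_m = Q_{m'}$ for an explicit $m'$, and the determinant twist is exactly what converts the symmetrizing idempotent $e$ into the antisymmetrizing (determinant) idempotent $f$.

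Next I would translate this into the statement about operator algebras. Since $e H_c e = D(Q_m)^W e$, conjugating gives $\delta e H_c e \delta^{-1} = \delta D(Q_m)^W e \, \delta^{-1} = D(\delta Q_m)^W (\delta e \delta^{-1})$, and one checks that $\delta e \delta^{-1}$, as an operator on $\CC[\hh^\circ]$, is intertwined with $f$ acting on the $\det$-twisted module — equivalently, multiplication by $\delta$ is a $W$-equivariant isomorphism from $\CC[\hh^\circ]$ with its standard $W$-action to $\CC[\hh^\circ] \otimes \det^{-1}$, carrying $e$ to $f$. On the other side, $f H_{\sigma(c)} f = D(Q_k)^W f$ by the determinant-twisted version of the quasi-invariant theorem (i.e. applying it to $E = \det$, or equivalently noting $f H_{\sigma(c)} f$ is the algebra of $W$-invariant differential operators on the $\det$-isotypic quasi-invariants $Q_k(\det)$ and invoking the identification $e(Q_k \otimes 1) = e Q_k(\CC W)$ in its $f$-analog). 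So the whole theorem reduces to the single combinatorial check that $\delta Q_m$ and $Q_k$ define the same space of operators after the $\det$-twist, which in turn reduces via the equivalence \eqref{mult equiv} to checking the ceiling-function identity $\lceil ((m')_{H,i} - i)/n_H \rceil = \lceil (k_{H,i} - i)/n_H \rceil$ for the shifted multiplicity function $m'$.

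I expect the main obstacle to be precisely this bookkeeping step: getting the index reflection $i \mapsto n_H - i$, the $\det$-twist (which shifts each $m_{H,i}$ by $\pm(n_H-1)$ in an $i$-dependent way because $\delta$ restricted near $H$ vanishes to order $n_H-1$ and the idempotents $e_{H,i}$ get permuted), and the $+2(n_H-i)/n_H$ correction to line up with the compatibility relation defining $k$. The subtlety compared to \cite{BeCh} Proposition 5.4 is, as the text emphasizes for Theorem \ref{main}, that here one is conjugating by $\delta$ rather than staying inside the spherical subalgebra, so the $W$-action genuinely changes; but since every ingredient (the quasi-invariant description of $eH_ce$, the equivalence \eqref{mult equiv}, the graded-character computation in the proof of Theorem \ref{exponents symmetry} where exactly the special case $m_S = -(n_H-1)$ and the $\det$-twist already appear) is in hand, no new idea is required beyond careful tracking of these three shifts, exactly as in the Berest--Chalykh treatment of Heckman--Opdam shift functors.
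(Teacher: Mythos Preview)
Your plan is correct but more circuitous than the paper's one-line argument. The paper factors the identity into two pieces. First, an elementary Dunkl-operator computation (no quasi-invariants needed) shows $\delta H_{c'}\delta^{-1}=H_{\tau(c')}$ as subalgebras of $D(\hh^\circ)\rtimes W$ for every $c'$: since $w(\delta)=\det(w)\delta$ one gets $\delta w\delta^{-1}=\det(w)^{-1}w$ (so $\delta e\delta^{-1}=f$ and $\delta e_{H,j}\delta^{-1}=e_{H,j+1}$), and $\delta\partial_y\delta^{-1}=\partial_y-\sum_H(n_H{-}1)\alpha_H(y)/\alpha_H$, whence $\delta y_{c'}\delta^{-1}=y_{\tau(c')}$; in particular $\delta eH_{c'}e\delta^{-1}=fH_{\tau(c')}f$. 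Second, Theorem~\ref{spherical invariance} with $g_0=\sigma_0$ the longest element of $G_W$ gives $eH_ce=eH_{\sigma_0(c)}e$. Composing yields $\delta eH_ce\delta^{-1}=fH_{\tau\sigma_0(c)}f=fH_{\sigma(c)}f$.

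Your route instead expresses both sides as $D(Q_{?})^W f$ and matches multiplicity functions via \eqref{mult equiv}. This works, but two steps you pass over need attention. You invoke an $f$-version $fH_{\sigma(c)}f=D(Q_k)^Wf$ of the quasi-invariant theorem, which is not stated in the paper and whose most natural proof (twisting the $e$-version by $\det$) is precisely the elementary $\delta$-conjugation above---at which point the quasi-invariant detour becomes redundant. You also need a density argument to pass from parameters admitting compatible multiplicity functions to arbitrary $c\in\C$. Neither is fatal, but the paper's factorization cleanly separates the easy conjugation step from the genuine content (the $G_W$-invariance of Theorem~\ref{spherical invariance}), whereas your approach rederives both at once with more bookkeeping.
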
 
\begin{proof}
This is obtained from Theorem \ref{spherical invariance} by taking $g_0 \in G_W$ to be the longest element.
\end{proof} This equality allows us to define a functor $F$ from $H_c$-mod to $H_{\sigma(c)}$-mod by
\begin{equation} \label{F def}
F(M)=H_{\sigma(c)} f \otimes_{e H_c e} eM.
\end{equation} Following the terminology from \cite{BeCh}, we refer to $F$ as the \emph{Heckman-Opdam shift functor}. In fact, $F$ preserves category $\OO$'s, and therefore induces a functor, which we also denote by $F$, from $\OO_c$ to $\OO_{\sigma(c)}$. The functor $F$ is similar to the Heckman-Opdam shift functor employed by Gordon for real reflection groups, but in the generality in which we require it, belongs purely to the world of complex reflection groups and has no direct real analog.

\subsection{The symmetrizing trace conjecture} Below we will use the Schur elements as stored by the computer algebra package GAP. In order to justify the conclusions we draw from this, we need to know that the symmetrizing trace conjecture holds for the Hecke algebra $\mathcal{H}_c$. For very recent work in this direction and further references see \cite{BCC} and \cite{BCCK}.

\subsection{Equivalences} The following theorem of Etingof (obtained by twisting \cite{Eti}, Theorem 5.5 by a linear character) makes our lives easier:
\begin{theorem}
Let $e \in \CC W$ be the idempotent for a linear character of $W$. The functor $M \mapsto eM$ from $H_c$-mod to $e H_c e$-mod is an equivalence if and only if $e H_c e$ is of finite global dimension. 
\end{theorem}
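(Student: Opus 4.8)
The plan is to deduce this from Theorem 5.5 of \cite{Eti}, which is precisely the assertion above in the special case where $e=e_0:=\frac{1}{|W|}\sum_{w\in W}w$ is the symmetrizing idempotent of the trivial character, by twisting with the linear character $\lambda$ whose idempotent is the given $e$. Write $\CC_\lambda$ for the one-dimensional $\CC W$-module affording $\lambda$, and for each $H\in\A$ let $k_H\in\{0,1,\dots,n_H-1\}$ be the unique integer with $\lambda^{-1}|_{W_H}=\mathrm{det}^{k_H}$ (recall that $\mathrm{det}$ restricts to an isomorphism $W_H\cong\mu_{n_H}$). A one-line computation with the formula $e_{H,j}=\frac{1}{n_H}\sum_{w\in W_H}\mathrm{det}^{-j}(w)w$ shows that on $M\otimes_\CC\CC_{\lambda^{-1}}$, with $W$ acting diagonally, the idempotent $e_{H,j}$ acts as $e_{H,j-k_H}$ acts on $M$ (indices taken modulo $n_H$). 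Letting $\CC[\hh]$ act through the first tensor factor and $y\in\hh$ act by $m\otimes v\mapsto(y_c\cdot m)\otimes v$, a direct check of the defining relations then shows that $M\otimes_\CC\CC_{\lambda^{-1}}$ acquires the structure of a module over $H_{c'}$, where $c'\in\C$ is defined by $c'_{H,j}=c_{H,j+k_H}$.

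In this way $M\mapsto M\otimes_\CC\CC_{\lambda^{-1}}$ is an equivalence from $H_c$-mod to $H_{c'}$-mod, with quasi-inverse $M'\mapsto M'\otimes_\CC\CC_\lambda$ (a standard construction). Because $\lambda\otimes\CC_{\lambda^{-1}}\cong\mathrm{triv}$ and $e$ is the idempotent of $\lambda$, this equivalence carries the $\lambda$-isotypic subspace $eM$ of $M$ onto the trivial-isotypic subspace $e_0(M\otimes_\CC\CC_{\lambda^{-1}})$; applying this with $M=H_c$ yields a ring isomorphism $eH_ce\cong e_0H_{c'}e_0$ under which the functor $M\mapsto eM$ corresponds to the functor $M'\mapsto e_0M'$ (concretely, there is a natural isomorphism $e_0(M\otimes_\CC\CC_{\lambda^{-1}})\cong(eM)\otimes_\CC\CC_{\lambda^{-1}}$ respecting the module structures). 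Since the global dimension of a ring is invariant under isomorphism, $eH_ce$ has finite global dimension if and only if $e_0H_{c'}e_0$ does, and $M\mapsto eM$ is an equivalence if and only if $M'\mapsto e_0M'$ is. Theorem 5.5 of \cite{Eti}, applied to the Cherednik algebra $H_{c'}$, states exactly that $M'\mapsto e_0M'$ is an equivalence if and only if the spherical subalgebra $e_0H_{c'}e_0$ has finite global dimension; the theorem follows.

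Since Theorem 5.5 of \cite{Eti} may be invoked, there is no genuinely hard step in this plan: the only work is the bookkeeping confirming that tensoring with $\CC_{\lambda^{-1}}$ really effects the passage from $H_c$ to $H_{c'}$ and carries $e$ to $e_0$ --- in particular that the singular terms $\alpha_H(y)/\alpha_H$ of the Dunkl operators transform precisely by the reindexing $c_{H,j}\mapsto c_{H,j+k_H}$ and in no other way --- which is the same sort of accounting with the idempotents $e_{H,j}$ and the local data carried out in \ref{KZ local data} and in the discussion of quasi-invariants above, and introduces no new ideas. If one instead wishes to reprove the theorem from scratch in the spirit of \cite{Eti}, the key observations are that $M\mapsto eM$ is always an exact quotient functor whose left adjoint $N\mapsto H_ce\otimes_{eH_ce}N$ is fully faithful, hence an equivalence precisely when $H_ceH_c=H_c$; one implication of the theorem is then immediate because $H_c$ has finite global dimension and Morita equivalent rings share it, whereas the converse implication --- that finiteness of the global dimension of $eH_ce$ forces $H_ceH_c=H_c$ --- is the real obstacle and calls for a genuine homological argument.
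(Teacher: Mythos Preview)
Your proposal is correct and follows exactly the approach the paper indicates: the paper does not prove this theorem but simply attributes it to Etingof, noting that it is ``obtained by twisting \cite{Eti}, Theorem 5.5 by a linear character,'' which is precisely what you carry out in detail. (You may want to recheck the sign in the index shift defining $c'$---with your conventions the computation gives $c'_{H,j}=c_{H,j-k_H}$ rather than $c_{H,j+k_H}$---but the argument is of course insensitive to this.)
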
 When $e$ is the trivial idempotent for $W$, we call $c$ \emph{aspherical} if the functor $M \mapsto eM$ is not an equivalence and \emph{spherical} if it is. Combining this theorem with Theorem \ref{spherical invariance} shows that the set of aspherical (resp., spherical) parameters $c$ is stable by the dot action of $G_W$ on $\C$. 

Furthermore, by Theorem 4.1 from \cite{BeEt}, whether or not $M \mapsto eM$ is an equivalence can be checked on category $\OO_c$:
\begin{theorem}
For an idempotent $e \in \CC W$ of a linear character of $W$, the functor $M \mapsto eM$ is not an equivalence if and only if there exists $L \in \mathrm{Irr}(\OO_c)$ with $eL=0$.
\end{theorem}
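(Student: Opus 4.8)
The implication ``$\Leftarrow$'' is immediate: the functor $M \mapsto eM$ is exact, so if some nonzero $L \in \mathrm{Irr}(\OO_c)$ has $eL = 0$ then this functor kills a nonzero object and cannot be faithful, hence is not an equivalence. For ``$\Rightarrow$'' I would route everything through the two-sided ideal $J = H_c e H_c$. Since $eM = \Hom_{H_c}(H_c e, M)$ and $H_c e$ is a projective left $H_c$-module, it is a general fact that $M \mapsto eM$ is an equivalence onto $(eH_ce)\text{-mod}$ exactly when $H_c e$ is a generator of $H_c\text{-mod}$, that is, exactly when $J = H_c$ (alternatively this follows from the quoted theorem of Etingof, as finite global dimension of $eH_ce$ forces $J = H_c$). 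As $e \in J$, a simple $H_c$-module is killed by $e$ iff it is killed by $J$, and $\mathrm{Irr}(\OO_c) = \{L_c(E) : E \in \mathrm{Irr}(\CC W)\}$. So it suffices to prove: if $J \subsetneq H_c$ then $eL_c(E) = 0$ for some $E$.

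The next step is a generation criterion for standard modules. Each $\Delta_c(E)$ is generated over $H_c$ by its degree-zero piece $E$, which is $\CC W$-irreducible; hence a graded $H_c$-submodule of $\Delta_c(E)$ is either everything or lies in the unique maximal proper graded submodule $\mathrm{rad}\,\Delta_c(E)$, with $L_c(E) = \Delta_c(E)/\mathrm{rad}\,\Delta_c(E)$. Since $e\Delta_c(E)$ is a graded subspace, $H_c \cdot e\Delta_c(E) = H_c e H_c \cdot \Delta_c(E) = J\,\Delta_c(E)$, and $e$ is exact, one obtains
$$eL_c(E) = 0 \iff e\Delta_c(E) \subseteq \mathrm{rad}\,\Delta_c(E) \iff J\,\Delta_c(E) \subsetneq \Delta_c(E).$$
Here $e\Delta_c(E) = (\CC[\hh] \otimes \chi^{-1} \otimes E)^W \neq 0$, with $\chi$ the character of $e$, so the condition is not vacuous. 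Passing to the contrapositive, the task becomes: if $J\,\Delta_c(E) = \Delta_c(E)$ for every $E$, then $J = H_c$. Using the triangular decomposition $H_c \cong \CC[\hh]\otimes\CC W\otimes\CC[\hh^*]$ and the identification $\bigoplus_E \Delta_c(E)^{\oplus\dim E} \cong \mathrm{Ind}_{\CC[\hh^*]\rtimes W}^{H_c}(\CC W) = H_c/H_c\hh$ (where $\hh \subseteq \CC[\hh^*]$ denotes the span of the Dunkl operators), this hypothesis is precisely the single equality $H_c = J + H_c\hh$.

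It remains to upgrade $H_c = J + H_c\hh$ to $J = H_c$, and this is the step I expect to be the genuine obstacle. Two ingredients feed in. First, since $W$ acts freely on the hyperplane complement $\hh^\circ$, the idempotent $e$ is already full in $\CC[\hh^\circ]\rtimes W$ (the line bundle $e\,\CC[\hh^\circ]$ is a progenerator over $\CC[\hh^\circ]^W$), hence full in $H_c[\delta^{-1}] = D(\hh^\circ)\rtimes W$; therefore $J[\delta^{-1}] = H_c[\delta^{-1}]$, and since $\delta$ is a non-zero-divisor this means $\delta^P \in J$ for some $P \geq 0$, i.e.\ $\delta$ is nilpotent in $\bar H := H_c/J$. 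Second, feeding $H_c = J + H_c\hh$ back into itself and using that the Dunkl operators commute (so $H_c\hh^k = \CC[\hh]\otimes\CC W\otimes\CC[\hh^*]_{\geq k}$) gives $H_c = J + H_c\hh^k$ for all $k$; the structural point is that for each Euler degree $n$ the quotient $(H_c)_n/(H_c\hh^k)_n$ is finite-dimensional, so this says exactly that $J$ is $\hh$-adically dense in each Euler-graded piece of $H_c$. Combining the nilpotence of $\delta$ in $\bar H$ with this density, via a Nakayama/Artin--Rees-type argument, should force $\bar H = 0$. The crux, and the place where the special structure of the rational Cherednik algebra must really be used, is precisely this last bootstrap: showing that $H_c e H_c$ is $\hh$-adically closed in each Euler degree (an Artin--Rees statement for the left ideal $H_c\hh$ against the two-sided ideal $H_c e H_c$), which is what turns ``$J$ dense'' into ``$J = H_c$''.
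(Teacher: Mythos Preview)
The paper does not prove this statement; it is quoted as Theorem~4.1 of \cite{BeEt} and used as a black box. There is thus no proof in the present paper to compare your attempt against.

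On your attempt itself: the direction ``$\Leftarrow$'' is correct and standard. For ``$\Rightarrow$'', the reduction you carry out is valid---in particular the chain
\[
eL_c(E)=0 \iff e\Delta_c(E)\subseteq\mathrm{rad}\,\Delta_c(E)\iff J\Delta_c(E)\subsetneq\Delta_c(E)
\]
holds, and summing over $E$ does give $H_c=J+H_c\hh$ as the hypothesis to be upgraded to $J=H_c$. But the final step, which you yourself flag as the crux, is a genuine gap and not a routine Artin--Rees/Nakayama verification. Each Euler-graded piece $(H_c)_n$ is infinite-dimensional and is not complete for the $\hh$-adic topology, so the density statement $J_n+(H_c\hh^k)_n=(H_c)_n$ for all $k$ does not by itself force $J_n=(H_c)_n$. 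Nor does the nilpotence of $\delta$ modulo $J$ (a condition on the $\CC[\hh]$-side) interact in any evident way with the $\hh$-adic filtration coming from the Dunkl operators; there is no commutative Noetherian setting here in which to invoke Artin--Rees for the pair $(H_c\hh,\,J)$, and $H_c\hh$ is only a left ideal. Closing this requires an additional structural ingredient---the argument in \cite{BeEt} supplies one---and your plan as written does not.
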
 

The next lemma is a key technical point, and the only place we will appeal to the classification of irreducible complex reflection groups and the hypothesis that the Hecke algebra is symmetric:

\begin{lemma}
For a parameter $c \in \C$ subject to $c_{\hh^*}=1$ but otherwise generic, the functor $M \mapsto eM$ is an equivalence from $H_c$-mod to $e H_c e$-mod.
\end{lemma}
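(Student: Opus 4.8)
The plan is to recast the statement as a question about the aspherical locus and then settle it case by case using the classification of irreducible complex reflection groups. By the theorem of \cite{BeEt} quoted above, the functor $M\mapsto eM$ fails to be an equivalence for a parameter $c$ precisely when some simple object $L$ of $\OO_c$ satisfies $eL=0$; so the content of the lemma is that the generic point of the affine hyperplane $\C_1:=\{c\in\C:\ c_{\hh^*}=1\}$ is spherical. The first observation is that the aspherical locus $\Sigma_W\subseteq\C$ is a closed, $G_W$-stable subset (the $G_W$-stability being exactly the consequence of Theorem \ref{spherical invariance} and Etingof's equivalence theorem \cite{Eti} recorded above), and that it is contained in the locus where $\OO_c$ fails to be semisimple: indeed, if $\OO_c$ is semisimple then each simple is a standard module $\Delta_c(E)$ and $e\Delta_c(E)=(\CC[\hh]\otimes E)^W\neq 0$, since $\CC[\hh]$ contains the regular representation of $W$. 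That non-semisimplicity locus is the hyperplane arrangement $\{c_E-c_F=k\}$ ($E,F\in\mathrm{Irr}(W)$, $k\in\ZZ$). Consequently it is enough to show, for each $W$, that $\C_1$ contains a spherical parameter: for then, if $\dim\C\geq 2$, the generic point of $\C_1$ is spherical (sphericity being an open condition), while if $\dim\C=1$ the hyperplane $\C_1$ is a single point and the claim is immediate.

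I would then go through the classification. For a real reflection group the parameter on $\C_1$ is $c=1/h$, and $1/h$ is spherical; this is the complex-reflection-group analog of the input to Gordon's construction in \cite{Gor}, and corresponds to the fact that $1/h$ lies in the spherical chamber of a Coxeter group. For $W=G(\ell,m,n)$ the aspherical locus was worked out while proving the $(h+1)^n$ bound in \cite{Val}, \cite{Gri}, using the combinatorics of multipartitions from \cite{Gri}, \cite{Gri2}; one reads off from that description that $\C_1$ is not an aspherical hyperplane, and in fact that the generic point of $\C_1$ already has $\OO_c$ semisimple. The remaining cases are the primitive complex reflection groups. For each of these I would use the Schur elements of the Hecke algebra $\mathcal{H}_c$ stored in GAP --- legitimate precisely because we are assuming the symmetrizing trace conjecture for $\mathcal{H}_c$ --- to compute the Rouquier families, hence (via the KZ functor) the decomposition numbers of $\OO_c$ along each candidate wall, and hence the finite list of aspherical hyperplanes; one then checks in each case that the linear form $c_{\hh^*}$, normalized to the value $1$, does not cut out an aspherical wall. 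For the primitive groups with one-dimensional parameter space this is a single numerical check, and for the others it is a finite linear-algebra verification.

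The step I expect to be the main obstacle is this last one: extracting the aspherical hyperplanes of a primitive complex reflection group from the Schur-element data, which requires carrying out, wall by wall, the passage from the Schur elements to the blocks of $\mathcal{H}_c$, then through KZ to the structure of $\OO_c$, and then to the determination of which simples $L_c(E)$ lose their $W$-invariants. The reductions in the first paragraph are formal, and the real and $G(\ell,m,n)$ cases are already available in the literature; it is only for the primitive groups, and only in this step, that the classification and the symmetrizing trace conjecture genuinely intervene.
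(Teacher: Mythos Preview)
Your strategy coincides with the paper's: reduce to exhibiting one spherical parameter on the hyperplane $\C_1=\{c_{\hh^*}=1\}$ and then go through the classification, appealing to Schur elements (hence the symmetrizing trace conjecture) for the primitive groups. Two remarks.

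First, a slip in the $G(\ell,m,n)$ case: you claim the generic point of $\C_1$ has $\OO_c$ semisimple. It does not. The hyperplane $c_{\hh^*}=1$ is exactly where there is a nonzero map $\Delta_c(\hh^*)\to\Delta_c(\mathrm{triv})$ with one-dimensional cokernel $L_c(\mathrm{triv})$ (this is the input from \cite{EtSt} used elsewhere), so $\OO_c$ is never semisimple on $\C_1$. This does not harm your argument, since you only need that $\C_1$ is not contained in the aspherical locus; the paper extracts this for $G(\ell,m,n)$ directly from \cite{DuGr} rather than from \cite{Val} or \cite{Gri}.

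Second, for the primitive groups the paper sidesteps the ``main obstacle'' you anticipate. Instead of computing all aspherical hyperplanes, it tests the single parameter $c_{H,0}=0$, $c_{H,i}=1/h$ for $i\neq 0$: a GAP check shows that the only Schur elements vanishing there are those of the exterior powers $\Lambda^k\hh^*$, each to order one. This puts the relevant block in Rouquier's defect-one situation, so every simple other than $L_c(\mathrm{triv})$ is fully supported and $c$ is spherical; one then verifies $c\in\C_1$. Your route would succeed but is much heavier; the paper trades the wall-by-wall analysis for a single Schur-element evaluation plus the defect-one argument.
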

\begin{proof}
For the groups in the infinite family $G(\ell,m,n)$ this follows from the main theorem of \cite{DuGr}, upon observing that the equation $c_{\hh^*}=1$ is
$$d_0-d_{\ell-1}+\ell (n-1) c_0=1$$ in the coordinates for $\C$ used there. For the exceptional groups, one checks using GAP that the Schur elements for the exterior powers of $\hh^*$ are the only ones which are zero when the parameters $c$ are chosen with $c_{H,0}=0$ and $c_{H,i}=1/h$ for $i \neq 0$, and moreover that in this case, the Schur elements for the exterior powers vanish to order one. This implies that we are in the block of defect one case studied by Rouquier, and hence that every irreducible object of $\OO_c$ other than $L_c(\mathrm{triv})$ is fully supported. Thus $c$ is a spherical parameter, and one checks that it belongs to the hyperplane $c_{\hh^*}=1$. 
\end{proof}

Fixing a parameter $c \in \C$ subject to $c_{\hh^*}=1$ but otherwise generic, we define the Heckman-Opdam equivalence $F:\OO_c \to \OO_{\sigma(c)}$ as above by
$$F(M)=H_{\sigma(c)} f \otimes_{e H_{c} e} eM$$ where we view $H_{\sigma(c)} f$ as a right $eH_{c} e$-module via the isomorphism $ e H_c e \cong f H_{\sigma(c)} f $ from Theorem \ref{HO equality}, sending $ehe \in eH_c e$ to $\delta e h e \delta^{-1} \in f H_{\sigma(c)} f$.

\subsection{Shifting and KZ} The results just summarized imply that if $c$ is a spherical value then $F$ defines an equivalence from $\OO_c$ to $\OO_{\sigma(c)}$, and if regular then so is $\sigma(c)$. 

\begin{lemma}
If $F$ is an equivalence, the functor $F$ commutes with the KZ functor: there is an isomorphism $\mathrm{KZ}_{\sigma(c)} \circ F \cong \mathrm{KZ}_c$ for all spherical parameters $c \in \C$. 
\end{lemma}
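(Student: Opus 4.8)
The plan is to prove the stronger statement that $F$ becomes (naturally isomorphic to) the identity after localizing along the reflecting hyperplanes, and then to invoke the fact that $\mathrm{KZ}$ depends on an object of $\OO$ only through its localization together with its flat connection.

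First I would recall the ingredients. For $M \in \OO_c$ and a fixed regular point $p \in \hh^\circ$ one has $\mathrm{KZ}_c(M) = M(p) = M^\circ(p)$, the fiber of the $W$-equivariant flat connection $M^\circ = \CC[\hh^\circ] \otimes_{\CC[\hh]} M$, with the $B_W = \pi_1(\hh^\circ/W,p)$-action given by monodromy, and this action factors through $\mathcal{H}_c$. Since $\sigma \in \C_\ZZ \rtimes G_W$, the discussion of the group $G_W$ gives $\mathcal{H}_{\sigma(c)} = \mathcal{H}_c$ as quotients of $\CC B_W$; one checks directly from the formula for $\sigma(c)$ in \ref{sigma} that the multisets $\{\,j/n_H + \sigma(c)_{H,j} \bmod \ZZ\,\}$ and $\{\,j/n_H + c_{H,j} \bmod \ZZ\,\}$ agree for every $H$. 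Thus it suffices to produce a natural isomorphism of $(D(\hh^\circ) \rtimes W)$-modules $F(M)^\circ \cong M^\circ$: restricting it to the fiber at $p$ and using that it respects the connections then identifies $\mathrm{KZ}_{\sigma(c)}(F(M))$ with $\mathrm{KZ}_c(M)$ as $\mathcal{H}_c = \mathcal{H}_{\sigma(c)}$-modules.

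To build the localized isomorphism, write $A = D(\hh^\circ) \rtimes W = H_c[\delta^{-1}] = H_{\sigma(c)}[\delta^{-1}]$. After inverting $\delta$, Theorem \ref{HO equality} becomes the tautology $\delta\,(eAe)\,\delta^{-1} = fAf$ inside $A$, and localizing the definition of $F$ gives
$$F(M)^\circ = Af \otimes_{e H_c e} eM,$$
where $e H_c e$ acts on $Af$ on the right through $e H_c e \hookrightarrow eAe \xrightarrow{\ \mathrm{conj}_\delta\ } fAf \subseteq A$. By associativity of tensor products this equals $Af \otimes_{eAe}\bigl(eAe \otimes_{e H_c e} eM\bigr)$. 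Now, with $o$ the order of the determinant character, $\delta^{o} \in \CC[\hh]^W$, so $eAe$ is the Ore localization of $e H_c e$ at the powers of $\delta^{o}$, and since localizing at a $W$-invariant element commutes with the idempotent $e$ we get $eAe \otimes_{e H_c e} eM = eM^\circ$. Conjugation by $\delta$ sends the trivial isotype to the determinant isotype, i.e.\ $\delta \cdot eM^\circ = fM^\circ$, and identifies the twisted tensor product $Af \otimes_{eAe} eM^\circ$ with $Af \otimes_{fAf} fM^\circ$. Finally, because $W$ acts freely on $\hh^\circ$ the symmetrizing idempotent $e$ is already full in $\CC[\hh^\circ]\rtimes W$, hence full in $A$, and so is its conjugate $f$; therefore $AfA = A$, the functor $N \mapsto Af \otimes_{fAf} fN$ is inverse to $N \mapsto fN$, and the multiplication map $Af \otimes_{fAf} fM^\circ \to M^\circ$ is an isomorphism, natural in $M$. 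Composing these identifications yields the desired natural isomorphism $F(M)^\circ \cong M^\circ$.

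The main obstacle is the bookkeeping in the previous paragraph: one must verify, with the conventions fixed earlier for $\delta = \prod_{r \in R}\alpha_r$, for the symmetrizing idempotent $e$, and for the determinant idempotent $f$, that the $\delta$-conjugation occurring in Theorem \ref{HO equality} is exactly the one interchanging $e$ and $f$ (so that $\delta \cdot eM^\circ$ is genuinely $fM^\circ$, not some other linear-character isotype), that $\delta^{o}$ is $W$-invariant for $o$ the order of $\det$, and that $AfA = A$; each point is routine but together they carry the content. The hypothesis that $c$ is spherical (equivalently, that $F$ is an equivalence) enters only to guarantee that $F$ is an exact functor carrying $\OO_c$ into $\OO_{\sigma(c)}$, so that $\mathrm{KZ}_{\sigma(c)} \circ F$ is a well-defined functor to $\mathcal{H}_c$-mod; the localization isomorphism itself does not use it. One could also argue more formally via the universal property of the quotient functor---$\mathrm{KZ}_c$ and $\mathrm{KZ}_{\sigma(c)} \circ F$ are exact functors out of $\OO_c$ whose kernels both consist of the $\delta$-torsion modules (again by $F(M)^\circ \cong M^\circ$), so both present the Serre quotient of $\OO_c$ by that subcategory---but making the identifications of the target categories compatible still needs $\mathcal{H}_{\sigma(c)} = \mathcal{H}_c$, so I expect the localization route to be the cleanest to write down.
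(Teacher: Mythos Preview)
Your argument is correct and takes a genuinely different route from the paper's own proof. The paper argues via the representing projective: it first checks that $F$ preserves the dimension of the generic fiber (using rank comparisons over $\CC[\hh]$ and $\CC[\hh]^W$), then uses that an equivalence sends indecomposable projectives to indecomposable projectives to conclude $F(P_{\mathrm{KZ},c})\cong P_{\mathrm{KZ},\sigma(c)}$, from which the isomorphism of functors follows. You instead prove the stronger and more explicit statement that $F(M)^\circ\cong M^\circ$ naturally as $D(\hh^\circ)\rtimes W$-modules, by unwinding the tensor product after inverting $\delta$ and invoking Morita theory on $\hh^\circ$ (where $e$ and $f$ are full because $W$ acts freely). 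Your approach buys two things: it makes transparent that the monodromy representations literally coincide (so the identification $\mathcal H_{\sigma(c)}=\mathcal H_c$ is the identity on $\CC B_W$-quotients, a point the paper's argument leaves implicit), and the isomorphism $F(M)^\circ\cong M^\circ$ holds without the spherical hypothesis, as you note. The paper's approach, on the other hand, is quicker to state once one has the GGOR description of $P_{\mathrm{KZ}}$ in hand and avoids the bookkeeping with $\delta$-conjugation and Ore localization at $\delta^o$ that you correctly flag as the main obstacle.
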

\begin{proof}
We first observe that the dimension of the generic fiber of $F(M)$ is equal to the dimension of the generic fiber of $M$. This follows from the fact that for $p \in \hh^\circ$ we have
$$\mathrm{dim}(M(p))=\mathrm{rk}_{\CC[\hh]}(M)=\mathrm{rk}_{\CC[\hh]^W}(eM)=\mathrm{rk}_{\CC[\hh]^W}(fF(M))=\mathrm{rk}_{\CC[\hh]} F(M)=\mathrm{dim}(F(M)(p)),$$ since $f F(M) \cong eM$ as $\CC[\hh]^W$-modules and $\mathrm{rk}_{\CC[\hh]^W} f F(M)=\mathrm{rk}_{\CC[\hh]}(F(M))$.  Now since $F$ is an equivalence, it takes the indecomposable projective objects of $\OO_c$ to the indecomposable projective objects of $\OO_{\sigma(c)}$. The KZ functor is represented by the projective object
$$P_{\mathrm{KZ},c}=\bigoplus P_c(E)^{\oplus d_E} \quad \text{where} \quad d_E=\mathrm{dim}(L_c(E)(p)).$$ 
and hence
$$F(P_{\mathrm{KZ},c}) \cong \bigoplus F(P_c(E))^{\oplus d_E},$$ where $d_E$ is the dimension of $L_c(E)(p)$, which is, by the preceding argument, the dimension of $F(L_c(E))(p)=\mathrm{dim}(\mathrm{top}(F(P_c(E)))(p))$. It follows that $F(P_{\mathrm{KZ},c}) \cong P_{\mathrm{KZ},\sigma(c)}$ and the lemma follows from this.
\end{proof} This lemma should also follow from the ideas of \cite{Sim} (see especially the proof of Lemma 4.9). The following conjecture is then the final ingredient for this approach:

\begin{conjecture}
If $F$ is an equivalence, then it is an equivalence of highest weight categories with $F(\Delta_c(E)) \cong \Delta_{s(c)}(\kappa_\sigma(E))$.
\end{conjecture}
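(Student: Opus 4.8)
The plan is to reduce the conjecture to the rigidity of highest weight covers. From the previous lemma we already know that $F$ is an exact equivalence, that $F$ commutes with the KZ functors ($\mathrm{KZ}_{\sigma(c)} \circ F \cong \mathrm{KZ}_c$), that $F$ carries projective objects to projective objects, and in particular that $F(P_{\mathrm{KZ},c}) \cong P_{\mathrm{KZ},\sigma(c)}$. On the other hand, exactly as in the proof of Theorem \ref{main} (using \ref{order} and Rouquier's Theorem 4.49 of \cite{Rou}, whose faithfulness hypotheses hold at our choice of $c$ and at $\sigma(c)$), there is a highest weight equivalence $G \colon \OO_c \to \OO_{\sigma(c)}$ commuting with the KZ functors and satisfying $G(\Delta_c(E)) \cong \Delta_{\sigma(c)}(\kappa_\sigma(E))$ for all $E$. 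So it suffices to prove $F \cong G$, after which $F(\Delta_c(E)) \cong \Delta_{\sigma(c)}(\kappa_\sigma(E))$ follows and $F$ is automatically a highest weight equivalence (an exact equivalence carrying standards to standards and projectives to projectives, with the two orders intertwined by $\kappa_\sigma$ via \ref{order}).

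To prove $F \cong G$ I would use the double centralizer property. By Theorem \ref{spherical invariance} the Hecke algebras $\mathcal{H}_c$ and $\mathcal{H}_{\sigma(c)}$ are literally equal, and for our choice of $c$ the KZ functor realizes $\OO_c$ as the module category of $\mathrm{End}_{\mathcal{H}_c}(\mathrm{KZ}_c(P))$ for a projective generator $P$, and likewise for $\OO_{\sigma(c)}$. Since $F$ is exact, commutes with KZ, and sends a projective generator to a projective generator, the resulting identification of endomorphism algebras is compatible with the equality $\mathcal{H}_c = \mathcal{H}_{\sigma(c)}$, so $F$ is isomorphic to the equivalence determined by this data; but $G$ has the same properties, whence $F \cong G$.

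The step I expect to be the main obstacle is precisely the isomorphism $F \cong G$: one must rule out the possibility that $F$ differs from $G$ by a nontrivial auto-equivalence of $\OO_c$ invisible to $\mathrm{KZ}_c$, and for this one needs the precise faithfulness of the cover $\mathrm{KZ}_c$ — i.e. that any exact auto-equivalence of $\OO_c$ commuting with $\mathrm{KZ}_c$ and preserving projectivity is isomorphic to the identity. Concretely, this amounts to showing that $F$ preserves the class of $\Delta$-filtered objects, equivalently (by \cite{GGOR}) that $F$ preserves freeness over $\CC[\hh]$. This is clear after inverting $\delta$, where $F(M)^\circ \cong M^\circ$ because $e$ and $f$ are full idempotents of $D(\hh^\circ) \rtimes W$ (both giving the Morita equivalence with $D(\hh^\circ)^W$); the entire difficulty is thus concentrated in controlling the $\CC[\hh]$-torsion of $F(M)$ supported on the reflection hyperplanes.

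A fallback avoiding the rigidity input is deformation. Choose a line $t \mapsto c_t$ through $c = c_0$ with $c_t$ generic, hence $\OO_{c_t}$ semisimple and $c_t$ spherical, for $t \neq 0$; build the shift functor over the local ring at $0$. For $t \neq 0$, semisimplicity together with Lemma \ref{KZ twist lemma} and \ref{KZ local data} gives $F_{c_t}(\Delta_{c_t}(E)) = F_{c_t}(L_{c_t}(E)) \cong \Delta_{\sigma(c_t)}(\kappa_\sigma(E))$ at once, and since standard modules and the shift functor both specialize well, setting $t = 0$ yields the conjecture. The obstacle on this route is checking that the shift functor commutes with specialization, i.e. the vanishing of the relevant higher $\mathrm{Tor}$ over $e\mathbf{H}e$ in the family — which is the family analogue of the exactness of $F$ already invoked in the statement.
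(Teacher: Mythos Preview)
This statement is a \emph{conjecture} in the paper, not a theorem: the paper gives no proof. The only remark the paper offers is that (following an observation of Jos\'e Simental) the conjecture would follow once one checks that $F$ and its inverse preserve the class of standardly filtered objects, equivalently the class of objects free over $\CC[\hh]$; the author writes ``It seems likely to me that this observation can be turned into a proof.'' So there is nothing to compare your argument against at the level of a completed proof.

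That said, your proposal is an honest proof \emph{strategy} that correctly isolates the difficulty. Your first route---constructing the Rouquier equivalence $G$ and then arguing $F\cong G$ via compatibility with $\mathrm{KZ}$---reduces exactly to the faithfulness question you flag: one needs that an exact auto-equivalence of $\OO_c$ commuting with $\mathrm{KZ}_c$ is the identity, i.e.\ that the cover is rigid enough. You then observe that this boils down to $F$ preserving $\Delta$-filtered objects, i.e.\ freeness over $\CC[\hh]$, and that after inverting $\delta$ this is clear, so the whole problem is torsion supported on the hyperplane arrangement. This is precisely the reduction the paper records via Simental's remark; you have independently arrived at the same obstruction and localized it in the same place. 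Neither you nor the paper closes this gap.

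Your second (deformation) route is not discussed in the paper. It is a reasonable alternative, but the obstacle you name---flatness of the family version of $F$, i.e.\ vanishing of the relevant higher $\mathrm{Tor}$ over $e\mathbf{H}e$ in the one-parameter family---is genuine and not obviously easier than the torsion control in the first route. In short: your proposal is a clear-eyed outline with the hard step honestly marked as open, matching the paper's own assessment that this remains a conjecture.
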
 

Given the conjecture, we have a short exact sequence
$$ \Delta_{\sigma(c)}(\kappa_\sigma(\hh^*)) \to \Delta_{\sigma(c)}(\mathrm{triv}) \to L_{\sigma(c)}(\mathrm{triv})=F(\CC) \to 0$$ and the determinant appears exactly once in $F(\CC)=L_{\sigma(c)}(\mathrm{triv})$ by construction. The calculation of the $c$-function implies that the image of $\kappa_\sigma(\hh^*)$ lies in degree $g+1$, which implies that the dimension of $F(\CC)$ is $(g+1)^n$ just as before. Jos\'e Simental has pointed out that the conjecture will follow if one checks that $F$ and its inverse preserve the class of standardly filtered modules, which can also be characterized as those objects of $\OO_c$ that are free as modules over the polynomial ring $\CC[\hh]$. It seems likely to me that this observation can be turned into a proof.

\subsection{The classical groups} Finally, we present our third construction of the module approximating $R_W$ in case $W=G(\ell,m,n)$. For these groups we will use the coordinates $(d_0,d_1,\dots,d_{\ell-1},c_0)$ on $\C$ as in \cite{Gri2}. We define the set $\Gamma(\mathrm{triv})$ as in \cite{Gri3}: it consists of pairs $(P,Q)$ where $P$ is a bijection from the boxes of the trivial partition $(n)$ to the integers $\{1,2,\dots,n\}$, $Q$ is a function from the boxes of $(n)$ to the non-negative integers which is weakly increasing from left to right, and whenever $b_1$ and $b_2$ are boxes with $b_1$ appearing to the left of $b_2$ and $Q(b_1)=Q(b_2)$, then we have $P(b_1) > P(b_2)$ (thus for instance of $Q$ is the zero function then $P$ is strictly decreasing from left to right). 

\subsection{The principal coinvariant type representation} We take paramaters $c=(c_0,d_0,d_1,\dots,d_{\ell-1})$ generic subject only to the condition
$$d_0-d_{1-2\ell/m}+\ell(n-1)c_0=\ell(n-1)+2 \ell/m-1.$$ Then by Theorem 1.1 of \cite{Gri3} the module $L=L_c(\mathrm{triv})$ has basis $f_{P,Q}$ indexed by those pairs $(P,Q) \in \Gamma(\mathrm{triv})$ with
$$Q(b) \leq \ell(n-1)+2 \ell/m-2 \quad \hbox{for all $b \in \mathrm{triv}$.}$$ Here, as explained in 2.13 of \cite{Gri3}, instead of using $\Gamma(\mathrm{triv})$, the basis elements $f_{P,Q}$ may alternatively be indexed by $\mu \in \ZZ_{\geq 0}$, and the condition is simply that
$$\mu_i \leq \ell(n-1)+2 \ell/m-2 \quad \hbox{for all $1 \leq i \leq n$.}$$ Hence $L$ has basis
$$L=\CC \{f_\mu \ | \ \mu_i \leq \ell(n-1)+2 \ell/m-2 \ \forall 1 \leq i \leq n\},$$ where $f_\mu$ are the non-symmetric Jack polynomials of type $G(\ell,1,n)$. In particular, the dimension of $L$ is
$$\mathrm{dim}(L)=(\ell(n-1)+2 \ell/m-1)^n.$$ By using the machinery from \cite{FGM} we can compute its graded $W$-character; for the moment we will just note that, as in \cite{AjGr}, copies of the determinant representation in $L$ are in bijection with the set of $Q$'s appearing in some pair $(P,Q)$ as above, with $Q$ strictly increasing from left to right, $Q(b)=Q(b')$ mod $\ell$ for all $b,b'$, and with $Q(b)=\ell/m-1$ modulo $\ell/m$.

\subsection{Proof that $L$ is of $G(\ell,m,n)$-coinvariant type}

The unique $Q$ with these properties (in \cite{AjGr} we use the notation $Q \in \mathrm{Tab}_c(\mathrm{triv})$) which produces a copy of the determinant representation of $G(\ell,m,n)$ is given by the $Q$ with sequence 
$$\ell/m-1, \quad \ell+\ell/m-1, \quad 2 \ell+\ell/m-1, \quad \dots \quad , \quad (n-1) \ell+\ell/m-1.$$ Thus using the character formula from \cite{FGM} as in
\cite{AjGr} shows that the determinant appears exactly once in $L$. 

\def\cprime{$'$} \def\cprime{$'$}

\end{document}